\pgfplotsset{compat=1.18}
\definecolor{meinBlau}{rgb}{0.2,0.2,0.9} 
\definecolor{blau}{rgb}{0,0,0.75} 
\definecolor{rot}{rgb}{0.74,0,0} 
\definecolor{darkgreen}{rgb}{0,0.4,0.1} 
\definecolor{codegreen}{rgb}{0,0.6,0} 
\definecolor{codeblue}{rgb}{0,0.1,0.7} 
\definecolor{myblauA}{rgb}{0,0.1,1} 
\definecolor{myblauB}{rgb}{0,0.2,0.8} 
\definecolor{myblauC}{rgb}{0,0.3,0.7}
\definecolor{myblauD}{rgb}{0,0.4,0.55}
\definecolor{myblauE}{rgb}{0,0.5,0.4}
\definecolor{myblauF}{rgb}{0,0.6,0.3}
\lstdefinestyle{Rstyle}{
language=R, 
backgroundcolor=\color{white}, 
commentstyle=\color{codegreen}, 
keywordstyle=\color{codeblue}, 
numberstyle=\color{codeblue}, 
stringstyle=\color{codegreen}, 
basicstyle=\ttfamily\tiny, 
morekeywords={TRUE,FALSE}, 
deletekeywords={data,frame,length,as,character}, 
showstringspaces=false 
}
\newtheorem{theorem}{Theorem}
\newtheorem{lem}[theorem]{Lemma}
\newtheorem{prop}{Proposition}
\theoremstyle{definition}
\newtheorem{code}{Code}
\newtheorem{remark}{Remark}
\newtheorem{example}{Example}
\newtheorem{defi}{Definition}
\def\P{{\mathbb {P}}}
\def\E{{\mathbb {E}}}
\newcommand{\fallfak}[2]{\ensuremath{#1^{\underline{#2}}}}
\newcommand{\auffak}[2]{\ensuremath{#1^{\overline{#2}}}}
\newcommand{\N}{\ensuremath{\mathbb{N}}}
\newcommand{\R}{\ensuremath{\mathbb{R}}}
\DeclareMathOperator{\fc}{FC}
\newcommand{\FC}{\ensuremath{\fc_n}}
\newcommand{\FCn}[1]{\ensuremath{\fc_{#1}}}
\DeclareMathOperator{\Cut}{Cut}
\DeclareMathOperator{\Rif}{Rif}
\DeclareMathOperator{\id}{id}
\newcommand{\gro}{\ensuremath{\mathcal{O}}}
\newcommand{\pe}{\ensuremath{p^{\ast}}}
\DeclareMathOperator{\law}{\overset{\mathcal{L}}{=}}
\DeclareMathOperator{\claw}{\overset{\mathcal{L}}{\rightarrow}}
\DeclareMathOperator{\Erl}{Erl}
\DeclareMathOperator{\Exp}{Exp}
\DeclareMathOperator{\Bin}{B}
\DeclareMathOperator{\Po}{Po}
\DeclareMathOperator{\Be}{Be}
\begin{document}

\author[M.~Kuba]{Markus Kuba}
\address{Markus Kuba\\
Department Applied Mathematics and Physics\\
University of Applied Sciences - Technikum Wien\\
H\"ochst\"adtplatz 5, 1200 Wien} %
\email{kuba@technikum-wien.at}

\title[Card guessing after an asymmetric riffle shuffle]{Card guessing after an asymmetric riffle shuffle}

\keywords{Card guessing, riffle shuffle, asymmetric Gilbert-Shannon-Reed model, limit law}%
\subjclass[2000]{05A15, 05A16, 60F05, 60C05} %

\begin{abstract}
We consider a card guessing game with complete feedback. An ordered deck of
$n$ cards labeled $1$ up to $n$ is riffle-shuffled exactly one time.
Given a value $p\in(0{,}1)\setminus\{\frac12\}$, the riffle shuffle is assumed to be unbalanced, such that the cut is expected to happen at position $p\cdot n$. The goal of the game is to maximize the number of correct guesses of the cards: one after another a single card is drawn from the top, and shown to the guesser until no cards remain. We provide a detailed analysis of the optimal guessing strategy and study the distribution of the number of correct guesses. 
\end{abstract}

\maketitle

\section{Introduction}
\subsection{Riffle shuffle and card guessing} In this work we study the Gilbert–Shannon–Reeds model for riffle shuffling $n$ cards, $n\in\N$, sometimes also known as dovetail-shuffling. 
Informally speaking, an ordered deck of cards labeled one up to $n$ is cut near the middle into
two piles according to a binomial distribution with parameter one-half. Then, the cards are riffled together according
to the following rule: if the left packet has $k$ cards and the right has $n-k$ cards, drop the next
card from the left packet with probability $k/n$ and from the right packet
with probability $(n-k)/n$, with $0\le k\le n$. Then, we continue until all cards have been dropped. 
A great many questions are connected to this shuffling procedure. We mention the classical question of randomization of an ordered deck of cards, where we point out the works of Aldous~\cite{Aldous1983}, Aldous and Diaconis~\cite{AldousDiaconis1986}, Bayer and Diaconis~\cite{BayerDiaconis} and refer the reader to the recent work of Assaf, Diaconis and Soundararajan~\cite{ADS2011} for a thorough review of the literature regarding this question. 
The topic of this work is the analysis of a card guessing procedure. Given a once riffle-shuffled deck, a person guesses the cards and tries to maximize the number $X_n$ of correct guesses. One may consider different feedback rules like complete feedback, the drawn card is shown to the guessing person, or no feedback at all, such that the identities of the cards are not revealed, nor is the guesser told whether a particular guess was correct or not. We emphasize that such questions are also classical and refer the reader to the work of Ciucu~\cite{Ciucu1998}, where he studied an optimal strategy under the no feedback rule. Progress on this was recently obtained in~\cite{KSTY2023,NFNW-KT2022}. Concerning the complete feedback variant, Liu~\cite{Liu2021} and also Krityakierne and Thanatipanonda~\cite{KT2023} made progress on the guessing problem assuming that the guesser is using the optimal strategy. An asymptotic expansion of the expected value $\E(X_n)$ is provided for $n$ tending to infinity in~\cite{Liu2021}, whereas  an enumerative analysis and a study of higher moments has been carried out in~\cite{KT2023}. Therein, also precise asymptotics of the first few moments $\E(X_n)$, $\E(X_n^2)$, etc. were obtained using both enumerative and symbolic methods. Building on the earlier results, limit laws have been obtained for no feedback in~\cite{KuPa2024}, and for complete feedback in~\cite{KuPa2025}. In the classical setting of the Gilbert–Shannon–Reeds model, the cut of the ordered deck happens at the middle of the deck. The aim of this work is to study an asymmetric version of the Gilbert–Shannon–Reeds model in the context of the card guessing game with full feedback. First, we discuss the asymmetric Gilbert-Shannon-Reeds model, where the cut is expected to happen at position $n\cdot p$, for a parameter $p\in [0,1]$, such that the two packets sizes are expected to be unbalanced. For $p=\frac12$ we reobtain the ordinary symmetric variant.
Such shuffles are sometimes called $p$-shuffles or biased riffle shuffles. They were introduced in \cite{DFP92} and further studied in~\cite{Ful98,Lal96,Lal00}, as well as~\cite{Sellke}.
We note that the boundary values $p=0$ or $p=1$ lead to a degenerate ordered deck with probability one. Our aim is to study the properties of the asymmetric model and determine the optimal guessing strategy for card guessing with full feedback, depending on the value of $p$. We determine the distribution of the number of correct guesses $X_n$, starting with $n$ cards labeled one up to $n$, once asymmetrically riffle shuffled, via a recursive equation and derive limit laws.

\subsection{Card guessing with colored cards}
Another card guessing games has been considered in the literature in great many articles~\cite{Diaconis1978,DiaconisGraham1981,HeOttolini2021,KnoPro2001,KP2024,KuPanPro2009,Leva1988,OttoliniSteiner2023,OT2024,Read1962,Zagier1990}. A deck of a total of $M$ cards is shuffled, 
and then the guesser is provided with the total number of cards $M$, as well as with the individual numbers of types, say hearts, diamonds, clubs and spades. After each guess of the type of the next card, the person guessing the cards is shown the drawn card, which is then removed from the deck. This process is continued until no more cards are left. Under the assumption that person guessing the cards tries to maximize the number of correct guesses, one is interested in the total number of correct guesses. 
In the simplest non-trivial setting there are only two colors, red (grouping together hearts and diamonds) and black (clubs and spades).
Their numbers are given by non-negative integers $m_1$, $m_2$, with $M=m_1+m_2$. One is then interested in the random variable $C_{m_1,m_2}$, counting the number of correct guesses. Here, we remark that not only the distribution and the expected value of the number of correct guesses is known~\cite{DiaconisGraham1981,KnoPro2001,Leva1988,Read1962,Zagier1990}, but also multivariate limit laws and additionally interesting relations to combinatorial objects such as Dyck paths and urn models are given~\cite{DiaconisGraham1981,KP2024,KuPanPro2009}. For the general setting of $n$ different types of cards we refer the reader to~\cite{DiaconisGraham1981,HeOttolini2021,OttoliniSteiner2023,OT2024} for recent developments. It turned out that this guessing game is also intimidatingly related to the riffle shuffling model, as the analysis of the riffle-shuffled guessing procedure is linked to the guessing game with different colors~\cite{KuPa2025}. 
The different card guessing games mentioned in the introduction are not only of purely mathematical interest. There are applications to the analysis of clinical trials~\cite{BlackwellHodges1957,Efron1971}, fraud detection related to extra-sensory perceptions~\cite{Diaconis1978}, guessing so-called Zener Cards~\cite{OttoliniSteiner2023}, as well as relations to tea tasting and the design of statistical experiments~\cite{Fisher1936,OT2024}.

\subsection{Main results}
The main results of this work are the optimal strategy for the complete feedback model, as well as the limit laws for $X_n$, for $0<p<1$, with $p\neq \frac12$. For the reader's convenience we collect here the main distributional result of Theorem~\ref{the:1}, together with the previous result for the case $p=\frac12$~\cite{KuPa2025}, as well as the boundary values $p\in\{0,1\}$. Note that we write $X_n\sim a_n+b_n \cdot X$, if $(X_n-a_n)/b_n\claw X$. 

\begin{theorem}\label{the:Summary}
Given $p\in[0,1]$, let $\pe=\max\{p,1-p\}$. The random variable $X_n$, counting the number of correct guessing with full feedback after an (a)symmetric riffle shuffle with parameter $p$, satisfies:
\[
X_n \sim  n\cdot p^{\ast} +
\begin{cases}
D, \quad\text{for } p\in\{0,1\},\\
G, \quad\text{for } p\in(0,1)\setminus\{\frac12\},\\
\sqrt{n}\cdot GG,\quad\text{for } p= \frac12,
\end{cases}
\]
with $D$ denoting a degenerated random variable, such that $\P\{D=0\}=1$, $G$ denoting a geometric distribution with parameter $\rho=(1-\pe)/\pe$, $\P\{G=k\}=\rho^k(1-\rho)$, $k\ge 0$ and 
$GG$ a generalized gamma distributed random variable, with density $f(x)=\sqrt{\frac{2}{\pi}}\cdot 8x^2e^{-2x^2}$, $x\ge 0$.
\end{theorem}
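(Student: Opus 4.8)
The plan is to treat the three regimes separately. For the boundary values $p\in\{0,1\}$ the cut falls at position $0$, respectively $n$, with probability one, so the shuffle leaves the deck in its initial order $(1,2,\dots,n)$; a guesser who knows $p$ then knows the whole deck and scores $X_n=n=np^{\ast}$ surely, whence $X_n-np^{\ast}$ is identically $0=:D$. For $p=\tfrac12$ the assertion is precisely the main theorem of \cite{KuPa2025}, which I would simply invoke. The substance is the case $p\in(0,1)\setminus\{\tfrac12\}$, that is, Theorem~\ref{the:1}, whose proof I plan as follows. We may assume $p>\tfrac12$, the case $p<\tfrac12$ being symmetric, so $p^{\ast}=p$ and $\rho:=(1-p)/p\in(0,1)$. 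First I would record the structure produced by the asymmetric shuffle: the deck is a uniformly random interleaving of two increasing runs, the small cards $1,\dots,m$ (the left packet) and the large cards $m+1,\dots,n$ (the right packet), with the majority packet of size $m=np+O(1)$. Since full feedback is received whatever is guessed, the greedy rule, namely to guess whichever of the at most two possible next cards is more likely, is optimal, and a correct guess is exactly a correct prediction of which run the next card comes from; this is the optimal strategy in the two-colour card guessing problem with $m$ cards of one colour and $n-m$ of the other, so one expects $X_n$ and $C_{m,n-m}$ to have the same law. I would make this precise, and obtain quantitative control, through a recursive equation for the probability generating function $\E[v^{X_n}]$, obtained by conditioning on whether the top card comes from the left or the right packet, which strips off one card and reduces $n$ by one.

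The next step uses the representation $C_{m,n-m}=\max(m,n-m)+E_n$, where $E_n$ is the number of excursions of the uniformly random $\pm1$ lattice bridge from $0$ to $\delta:=2m-n$, built from $m$ up-steps and $n-m$ down-steps, that strictly exceed the terminal level $\delta$. For $p>\tfrac12$ one has $\delta=\Theta(n)$, and a Hoeffding-type estimate shows the bridge can rise above $\delta$ only within $O(1)$ steps of its right endpoint, since everywhere else the deterministic mean path lies $\Omega(n)$ below $\delta$, far beyond the $O(\sqrt n)$ typical fluctuations. Reading those last $O(1)$ steps from right to left, the increments are asymptotically i.i.d.\ $\pm1$ with $\P\{+1\}=p$, that is, a walk with negative drift, and the number of its excursions strictly above $0$ is geometrically distributed with parameter equal to the probability that such a walk ever descends by one, namely $(1-p)/p$. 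Hence $E_n\claw G$ with $G$ geometric of parameter $\rho=(1-p)/p=(1-p^{\ast})/p^{\ast}$, and since $\max(m,n-m)=m=np^{\ast}+O(1)$ this gives $X_n\sim np^{\ast}+G$, as asserted.

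I expect the main obstacle to be the rigorous identification $X_n\overset{\mathcal L}{=}C_{m,n-m}$ together with the localisation of $E_n$: one has to verify that the adaptive, value-aware optimal strategy has the same score distribution as the blind colour-guessing strategy, and then control the errors in the local-limit approximations uniformly in $n$; this is exactly what the recursive equation in the paper is designed to do, and extracting the precise geometric parameter, rather than merely the $O(1)$ order of the fluctuation, is the delicate point. As a consistency check, for $p=\tfrac12$ one has $\delta=O(1)$, so $E_n$ instead fluctuates on the scale $\sqrt n$ and, after rescaling, converges to a functional of the Brownian bridge whose law is the generalised gamma distribution with density $\sqrt{2/\pi}\,8x^{2}e^{-2x^{2}}$, recovering \cite{KuPa2025}.
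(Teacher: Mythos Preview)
Your treatment of the boundary cases $p\in\{0,1\}$ and the citation for $p=\tfrac12$ match the paper. For $p\in(0,1)\setminus\{\tfrac12\}$, however, the plan diverges from the paper's argument, and there are two genuine gaps.

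First, the heuristic identification $X_n\overset{\mathcal L}{=}C_{m,n-m}$ is not available: the riffle-shuffle guesser does \emph{not} know the cut $m$, so the optimal strategy is not the two-colour strategy, and on individual words the scores differ (for instance, with $n=4$ and word $aabb$ one obtains $X_4=4$ but $C_{2,2}=3$, while for $abab$ the inequality goes the other way). The paper resolves this not by proving any such identification, but by deriving an exact distributional recursion (Theorem~\ref{the:distDecomp}): condition on whether the first letter of the random word is $a$ or $b$; if $a$, the guess ``$1$'' is correct and the game restarts on $n-1$ cards; if $b$, the displayed card reveals the cut position and one enters genuine two-colour guessing on the remaining $n-1$ cards with known packet sizes $(n-1-J_n,J_n)$. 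Iterating the recursion shows that $X_n-np^\ast$ has the same limit as $C_{n-1-J_n,J_n}-np^\ast$, where $J_n$ is (essentially) binomial. Note also that the fluctuation of the cut about $np$ is $O_P(\sqrt n)$, not $O(1)$ as you wrote; the paper handles this by a normal approximation and Chernoff bounds when integrating over $J_n$ (Lemma~\ref{lem:1}).

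Second, the random-walk derivation of the geometric parameter is internally inconsistent. You state that the reversed increments are i.i.d.\ with $\P\{+1\}=p$ and call this a walk with ``negative drift'', but $p>\tfrac12$ gives positive drift; and the claim that the number of excursions strictly above $0$ equals a geometric whose parameter is ``the probability that such a walk ever descends by one'' is not a correct characterisation in either sign convention (for the positive-drift walk that probability is indeed $(1-p)/p$, but then the excursion count above $0$ is not finite; for the negative-drift walk the probability is $1$). The paper sidesteps this entirely by quoting the known limit $C_{m_1,m_2}-m_1\claw\mathrm{Geom}(\rho)$ for $m_2/m_1\to\rho\in(0,1)$ from \cite{KuPanPro2009} (recorded here as Theorem~\ref{the:KPP}), and then observing that $J_n/(n-1-J_n)\to(1-p^\ast)/p^\ast$ uniformly over the bulk of the binomial distribution, so that the geometric parameter is insensitive to the $O(\sqrt n)$ fluctuations of the cut.
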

The generalized gamma random variable is also known as a Maxwell-Boltzmann distribution. This theorem indicates to look more closely at the values $p=1$ and $p=\frac12$, where the behavior of $X_n$ changes. Thus, we additionally 
focus in detail on the transitions for $p\to 1$ and $p\to\frac12$, with $p=p(n)$, and also briefly comment on the symmetric behavior $p\to 0$. 
These transition can be observed assuming that $p=\frac12 +\alpha_n$, with $\alpha_n\to 0$. and also $p=1-\alpha_n$, with $\alpha_n>0$ and $\alpha_n\to 0$. Depending on how fast $\alpha_n\to 0$ for $n$ tending to infinity, we will observe several phase transitions  the limit law of $X_n$ in Theorems~\ref{the:Half} and~\ref{the:PoN}, our two other main results beyond Theorem~\ref{the:Summary}.

\subsection{Notation and plan of the paper}
In the next section, we present the asymmetric Gilbert-Shannon-Read model and record three equivalent descriptions of it. We also determine the distribution of the value of the first drawn card, as well as the optimal guessing strategy with complete feedback model. This leads then to a distributional equation for the number $X_n$ of correct guesses in Theorem~\ref{the:distDecomp}.
Section~\ref{sec:LimitLaw} is devoted to the derivation of the limit law of $X_n$. In Section~\ref{sec:Boundary} we look in detail into phase transitions for the parameter $p$.

As a remark concerning notation used throughout this work, we always write $X \law Y$ to express equality in distribution of two random variables (r.v.) $X$ and $Y$, and $X_{n} \claw X$ for the weak convergence (i.e., convergence in distribution) of a sequence
of random variables $X_{n}$ to a r.v.\ $X$. Furthermore we use $\fallfak{x}{s}:=x(x-1)\dots(x-(s-1))$ for the falling factorials, and $\auffak{x}{s}:=x(x+1)\dots(x+s-1)$ for the rising factorials, $s\in\N_0$. Moreover, $f_{n} \ll g_{n}$ denotes that a sequence $f_{n}$ is asymptotically smaller than a sequence $g_{n}$, i.e., $f_{n} = o(g_{n})$, $n \to \infty$, and $f_{n} \gg g_{n}$ the opposite relation, 
$g_{n} = o(f_{n})$.

\section{Asymmetric Gilbert-Shannon-Read model}
\subsection{Riffle shuffle model~\label{Subsection_GSR}}
A riffle shuffle is a certain card shuffling technique. In the mathematical modeling of card shuffling, the \emph{Gilbert–Shannon–Reeds} model~\cite{BayerDiaconis,DiaconisGraham1981,Gilbert1955} describes a probability distribution for the outcome of such a shuffling. 
We consider a sorted deck of $n$ cards labeled consecutively from 1 up to $n$. 
The deck of cards is cut into two packets, assuming that
the probability of selecting $k$ cards in the first packet and $n-k$ in the second packet is defined as a binomial distribution with parameters $n$ and $1/2$, such that $\P\{\Cut=k\}=\frac{\binom{n}k}{2^n}$, $0\le k\le n$, and the cut is expected to happen at position $n/2$ and the two packets nearly balanced. Afterward, the two packets are interleaved back into a single pile. In this work we consider an asymmetric version. Given $p\in(0,1)$, we assume that the distribution of the cutting position follows a binomial distribution $\text{Bin}(n,p)$:
\begin{equation*}
\P\{\Cut=k\}=\binom{n}k p^k (1-p)^{n-k}, \quad 0\le k\le n.
\end{equation*}
Then, as usual, one card at a time 
is moved from the bottom of one of the packets to the top of the shuffled deck, such that if $m_1$ cards remain in the first and $m_2$ cards remain in the second packet, then the probability of choosing a card from the first packet is $m_1/ ( m_1 + m_2 )$ and the probability of choosing a card from the second packet is $m_2 / ( m_1 + m_2 )$. See Figure~\ref{fig:RiffleMerge} for an example of a riffle shuffle of a deck of five cards.
\begin{figure}[!htb]
\includegraphics[scale=0.55]{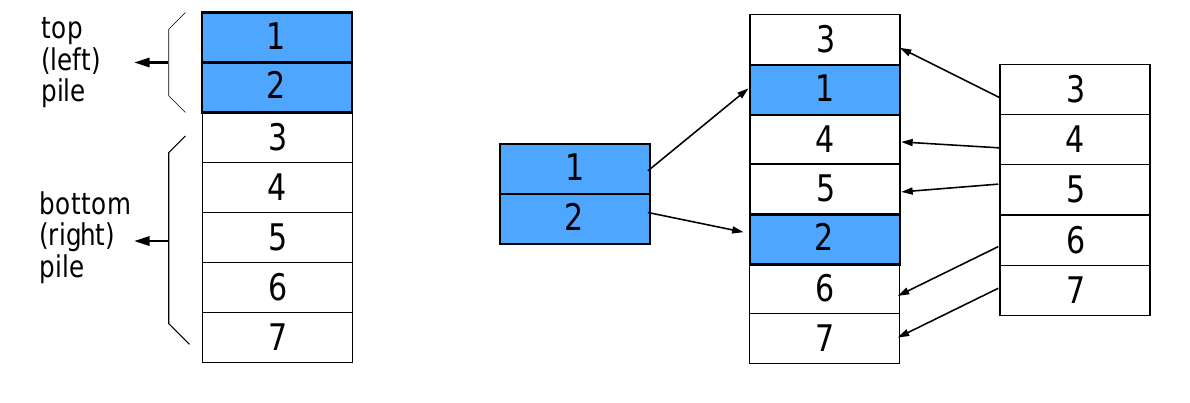}%
\caption{Example of a one-time riffle shuffle: a deck of seven cards is split after the number two with probability $\binom72\cdot p^2(1-p)^5$ and then interleaved.}%
\label{fig:RiffleMerge}%
\end{figure}

The asymmetric riffle shuffle defines a measure, which we denote by $\Rif_p$, on the symmetric group $\mathcal{S}_n$.
\begin{defi}
\label{eqn:defRif}
Given a parameter $p\in(0,1)$. The measure $\Rif_p$, on the symmetric group $\mathcal{S}_n$ is defined by
$\Rif_p\colon \mathcal{S}_n\to [0,1]$,
\begin{equation*}
\Rif_p(\sigma)=
\begin{cases}
\,\displaystyle{\sum_{k=0}^{n}p^k(1-p)^{n-k}
=\frac{(1-p)^{n+1}-p^{n+1}}{1-2p}},\quad \sigma=\id,\\[0.45cm]
\displaystyle{p^k(1-p)^{n-k}},\quad \sigma\text{ has two increasing subsequences: }1,\dots,k\\
\hspace{2.5cm}\text{and } k+1,\dots,n \text{ with } 1 \le k \le n-1\\[0.1cm]
\,0,\quad \text{otherwise}.
\end{cases}
\end{equation*}
\end{defi}
The ordinary Gilbert-Shannon-Read model is reobtained by setting $p=\frac12$. 
The outcome of an asymmetric one-time shuffling in this model can also be generated from $\{a,b\}$-sequences of length $n$. 
A random word $W$ of length $n$ over the alphabet $\{a,b\}$ is generated by $n$ independent identically distributed Bernoulli trials
\begin{equation}
\label{eqn:randomWord}
W=W_1W_2\dots W_n,
\end{equation}
with 
\begin{equation}
\label{eqn:randomWord2}
\P\{W_k=a\}=p,\quad \P\{W_k=b\}=1-p,\qquad 1\le k\le n.
\end{equation}
 It is beneficial to collect all different possible descriptions of the riffle shuffle.
\begin{lem}
\label{lem:description}
The asymmetric riffle shuffle with parameter $p$ of an ordered deck of cards can be described in three equivalent ways:
\begin{enumerate}
	\item The measure $\Rif_p$ on the symmetric group $\mathcal{S}_n$ in Definition~\ref{eqn:defRif};
	\item Performing an asymmetric cut according to a binomial distribution $B(n,p)$, followed by a random interleaving of the resulting two packs, as described in Subsection~\ref{Subsection_GSR};
	\item A random word $W=W_1W_2\dots W_n$ over the letters $\{a,b\}$, where the random variables $W_k$ are iid  random variables~\eqref{eqn:randomWord2}.
\end{enumerate}
\end{lem}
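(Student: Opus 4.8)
The plan is to establish the chain of equivalences $(2)\Leftrightarrow(1)$ and then $(3)\Leftrightarrow(2)$, in each case by computing the probability that the procedure in question produces a fixed permutation $\sigma\in\mathcal{S}_n$ and matching it with $\Rif_p(\sigma)$ from Definition~\ref{eqn:defRif}.

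For $(2)\Leftrightarrow(1)$ the key computation is to fix a cut value $k$ — so the first packet is $1,\dots,k$ and the second is $k+1,\dots,n$ — and show that the sequential drop rule produces each of the $\binom{n}{k}$ interleavings of the two packets with equal probability $1/\binom{n}{k}$. Indeed, the probability of any particular interleaving is a product over the $n$ drops of factors $m_1/(m_1+m_2)$ and $m_2/(m_1+m_2)$, and this product telescopes: the numerators contribute $k!\,(n-k)!$ and the denominators contribute $n!$, so the value is $k!\,(n-k)!/n!=1/\binom{n}{k}$ regardless of the order of the choices. Hence, given $\Cut=k$, each interleaving occurs with probability $\binom{n}{k}p^k(1-p)^{n-k}\cdot\binom{n}{k}^{-1}=p^k(1-p)^{n-k}$. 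Next I would observe that an interleaving of $1,\dots,k$ with $k+1,\dots,n$ is exactly a permutation $\sigma$ that is either the identity or has precisely the two increasing runs $1,\dots,k$ and $k+1,\dots,n$: a non-identity $\sigma$ of this form determines $k$ uniquely (the length of its first run) and is produced by a unique interleaving, so $\Rif_p(\sigma)=p^k(1-p)^{n-k}$; the identity is produced, for every $k\in\{0,\dots,n\}$, by the unique interleaving whose output reconstructs $1,\dots,n$, so $\Rif_p(\id)=\sum_{k=0}^n p^k(1-p)^{n-k}$, and the closed form $\frac{(1-p)^{n+1}-p^{n+1}}{1-2p}$ follows from summing the finite geometric series. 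Permutations of neither type are unreachable and receive probability $0$, so $(2)$ induces exactly the measure $\Rif_p$.

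For $(3)\Leftrightarrow(2)$ I would condition on $K:=\#\{i:W_i=a\}$. Since the $W_i$ are i.i.d.\ with $\P\{W_i=a\}=p$, we have $K\sim\Bin(n,p)$, matching the law of $\Cut$. Conditionally on $K=k$, the word $W$ is uniform over the $\binom{n}{k}$ arrangements of $k$ letters $a$ and $n-k$ letters $b$; reading $W$ left to right and interpreting the $j$-th letter as the instruction ``the $j$-th card dropped comes from the first packet if $a$, from the second if $b$'' is a bijection between these words and the $\binom{n}{k}$ interleavings of $1,\dots,k$ with $k+1,\dots,n$. Thus, given $K=k$, $W$ encodes a uniformly random interleaving, exactly as description $(2)$ does given $\Cut=k$; averaging over $k$ with the common weights $\binom{n}{k}p^k(1-p)^{n-k}$ shows $(3)$ and $(2)$ induce the same measure on $\mathcal{S}_n$.

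I expect the only genuine subtlety to be the telescoping argument that the drop rule yields the uniform distribution on interleavings given the cut — the rest is bookkeeping — together with the careful accounting of which interleavings yield the identity, so that the geometric-sum factor in $\Rif_p(\id)$ comes out correctly. Orientation conventions (dropping from the bottom of a packet onto the top of the deck, and the left-to-right reading of $W$) must be fixed consistently throughout, but they do not affect the resulting measure and so can be suppressed in the write-up.
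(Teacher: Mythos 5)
Your proposal is correct and follows essentially the same route as the paper: condition on the cut (equivalently, on the number of letters $a$), use that the drop rule makes all $\binom{n}{k}$ interleavings equally likely, identify interleavings with permutations having the two increasing runs $1,\dots,k$ and $k+1,\dots,n$, and sum over $k$ for the identity to get the geometric-series expression. The only difference is that you spell out the telescoping-product argument for the uniformity of the interleaving given the cut, a standard GSR fact that the paper uses without proof; this is a welcome addition rather than a deviation.
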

\begin{proof}
First, we notice the equivalence between the random word and the outcome of the riffle shuffle. 
Assume that the word has $k$ letters $a$, which happens with probability $p^k(1-p)^{n-k}$. By replacing the letters $a$ by the increasing sequence $1, 2, \dots, k$ and the letters $b$ by the increasing sequence $k+1, k+2, \dots, n$, we obtain an asymmetric riffle-shuffled pack,
where the cut has happened at position $k$, as there are $\binom{n}{k}$ different ways to arrange the letters $a$ and $b$. 
On the other hand, after a cut at position $k$, there are $1/\binom{n}{k}$ different outcomes, each corresponding to a different arrangement of the letters $a,b$. Finally, we note that the identity $\id\in\mathcal{S}_n$ occurs
when after a cut when the two packs are arranged just in the right order, or equivalently, when 
all of the $k$ letters $a$ appear before the $n-k$ letters $b$:
\[
\P\{\id\}=\sum_{k=0}^{n}\P\{\Cut=k\}\cdot \frac1{\binom{n}k}=\sum_{k=0}^{n}p^k(1-p)^{n-k}.
\]
Moreover, two increasing subsequences, one of the form $1,2\dots, k$, occur
with probability $p^k(1-p)^{n-k}$, as stated in Definition~\ref{eqn:defRif}.
\end{proof}

\subsection{First drawn card\label{SubSec:FC}}
In the following we will obtain the optimal strategy after a one-time asymmetric riffle shuffle for maximizing the number $X_n$ of correctly guessed cards, starting with a deck of $n$ cards, based on asymmetric the Gilbert-Shannon-Reads model. The ordinary case $p=1/2$ was discussed before in the literature~\cite{KT2023,Liu2021}; see also the earlier work~\cite{DiaconisGraham1981,Gilbert1955}. In order to do so, we first need to determine which card appears with the highest probability at the first draw.
\begin{lem}[Distribution of the first drawn card]
\label{lem:firstCard}
Assume that a deck of $n$ cards has been riffle shuffled once in the asymmetric Gilbert-Shannon-Reads model with parameter $p\in(0,1)$. 
The probability $\P\{\FC=m\}$ of the first card being $m$, $1\le m \le n$, 
is given by
\begin{equation}
\P\{\FC=m\} = 
\begin{cases}
\displaystyle{p +(1-p)^n},\quad \text{for }m=1,\\[0.2cm]
\displaystyle{\binom{n-1}{m-1}p^{m-1}(1-p)^{n-m+1}},\quad \text{for } 2\le m\le n.
\end{cases}
\end{equation}
\end{lem}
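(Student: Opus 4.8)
The plan is to compute $\P\{\FC=m\}$ by conditioning on the cut position $k$ and then, within each value of $k$, using the random-word description from Lemma~\ref{lem:description}(3). After a cut at position $k$, the top card of the shuffled deck is the first letter of the word $W$: it is card $1$ (the bottom of the first packet) if $W_1=a$, and it is card $k+1$ (the bottom of the second packet) if $W_1=b$. Hence the first drawn card can only ever be $1$ or of the form $k+1$ for some admissible $k$; this already forces the restriction $1\le m\le n$ and splits the analysis into the two cases stated. For $m=1$ we need $W_1=a$, which has probability $p$ regardless of $k$, \emph{plus} the degenerate contribution where the word is $b^n$ (cut at $k=0$, the whole deck is the second packet in order, first card is $1$); summing $\P\{W_1=a\}=p$ over all $k\ge 1$ together with the $k=0$ term $\P\{W=b^n\}=(1-p)^n$ gives $p+(1-p)^n$. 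Care is needed not to double count: for $k\ge 1$, the event $\FC=1$ is exactly $\{W_1=a\}$, and one must check these events across different $k$ partition correctly once weighted by $\P\{\Cut=k\}$.

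For $2\le m\le n$, the first card equals $m$ precisely when the cut happens at $k=m-1$ (so that the bottom card of the second packet is $m$) \emph{and} $W_1=b$. I would therefore write
\[
\P\{\FC=m\}=\P\{\Cut=m-1\}\cdot\P\{W_1=b\}
=\binom{n}{m-1}p^{m-1}(1-p)^{n-m+1}\cdot\frac{?}{?},
\]
but the cleaner route is to stay entirely within the random-word model: the word has first letter $b$ and exactly $m-1$ letters $a$ among the remaining $n-1$ positions. That event has probability $(1-p)\cdot\binom{n-1}{m-1}p^{m-1}(1-p)^{n-m}=\binom{n-1}{m-1}p^{m-1}(1-p)^{n-m+1}$, which is the claimed formula. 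The translation "first letter $b$, with $m-1$ letters $a$ in total $\Leftrightarrow$ cut at $m-1$ and first drawn card $m$" is exactly the bijection established in the proof of Lemma~\ref{lem:description}, so I can invoke it directly.

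As a consistency check I would verify $\sum_{m=1}^n\P\{\FC=m\}=1$: the terms for $2\le m\le n$ sum to $(1-p)\sum_{j=0}^{n-1}\binom{n-1}{j}p^{j}(1-p)^{n-1-j}-(1-p)^n\cdot[\text{the }j=0\text{ piece is }\,(1-p)^n]$; more precisely $\sum_{m=2}^n\binom{n-1}{m-1}p^{m-1}(1-p)^{n-m+1}=(1-p)\big(1-(1-p)^{n-1}\big)=(1-p)-(1-p)^n$, and adding the $m=1$ term $p+(1-p)^n$ yields $1$. This both confirms the formula and pins down why the apparently stray $(1-p)^n$ must sit with $m=1$ rather than being absorbed elsewhere.

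The main obstacle is purely bookkeeping: correctly handling the boundary cut positions $k=0$ and $k=n$ (which, together with the "sorted order" arrangements for intermediate $k$, are the events producing the identity permutation) so that the degenerate term $(1-p)^n$ is attributed to $m=1$ exactly once and not also, spuriously, to the $m=2,\dots,n$ branch. Once the word description is used consistently this is transparent, since the first letter of $W$ determines $\FC$ unambiguously for every realization of the cut.
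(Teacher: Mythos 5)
Your proposal is correct and follows essentially the same route as the paper: the paper's proof also reduces the lemma to the random-word description of Lemma~\ref{lem:description} (first card $1$ iff the word starts with $a$ or equals $b^n$; first card $m\ge 2$ iff the word starts with $b$ and contains exactly $m-1$ letters $a$), with the paper additionally spelling out the direct cut-and-interleave computation for $m=1$ that you only sketch with question marks. Your normalization check $\sum_m \P\{\FC=m\}=1$ is a nice extra but not part of the paper's argument.
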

\begin{remark}
We note that that for $m\ge 2$ we have
\[
\P\{\FC=m\}=(1-p)\P\{B+1=m\},
\]
where $B=B(n-1,p)$ denotes a binomial distribution.
\end{remark}
\begin{proof}
For $m=1$ we condition on the position of the cut and obtain
\begin{align*}
\P\{\FC=1\} &= \sum_{k=0}^{n}\P\{\Cut=k, 1 \text{ on top}\}\\
&= (1-p)^n+\sum_{k=0}^{n}\binom{n}{k}p^k(1-p)^{n-k}\cdot\binom{n-1}{k-1}\frac{(n-k)!\fallfak{k}{k-1}}{n!},
\end{align*}
where the sum easily simplifies to $p$ by the binomial theorem. On the other hand, we note that all random words of length $n$ with a leading $a$, 
and also the case of $n$ letters $b$, $W=b\dots b$, also has a leading one. For $m\ge 2$ the cut has to occur at position $m-1$
and we select all the other cards before last choice the card $m$, which can happen in $\binom{n-1}{m-1}$ different ways. 
Equivalently, we can also argue that the random word has $m-1$ letters $a$, $n-m+1$ letters $b$, 
with a leading $b$. 
\end{proof}
\begin{example}
\label{Ex:1}
We continue the example from Figure~\ref{fig:RiffleMerge}, $n=7$, with a concrete value $p=0.3$. 
The cut is expected to occur at $np=7/3$, and happens at two. Although $p<0.5$, we still guess the number one as the optimal value, 
as 
\[
p+(1-p)^n=0.3+0.7^7\approx 0.382 
\]
is bigger than the other values, see the table below.
\begin{table}[!htb]
\begin{tabular}{|c|c|c|c|c|c|c|c|}
\hline
m&1&2&3&4&5&6&7\\
\hline
$\P\{\FCn{7}=m\}$&0.382&0.212&0.227&0.13&0.042&0.007&0.005\\
\hline
\end{tabular}
\caption{Concrete values for the probabilities of the first card}
\end{table}
\end{example}

\begin{example}
\label{Ex:2}
We consider the case $n=20$ and $p=0.1$. 
The cut is expected to occur at $n p=2$. We have $\P\{\FCn{20}=1\}=0.1+0.9^{20}\approx 0.2256 $. We either guess two or three for the first card:
\[
\P\{\FCn{20}=2\}=\P\{\FCn{20}=3\}=\max_{2\le m\le 20}\P\{\FCn{20}=m\}\approx 0.2567.
\]

\end{example}

\subsection{Optimal strategy:\label{SubSec:Optimal}}
Now we turn to the \emph{optimal strategy}. Assume that a deck of $n$ cards has been riffle shuffled once in the asymmetric Gilbert-Shannon-Reads model with parameter $p\in(0,1)$. 
The strategy depends on the value of $p\in(0,1)$, as it turns out that the standard value $p=1/2$ is important in the sense that the strategy slightly changes for $p<1/2$ versus $p\ge 1/2$.

\begin{itemize}
	\item Case $\frac12 \le p <1$. The guesser should guess 1 on the first
card, as his chance of success is more than $50\%$ by Lemma~\ref{lem:firstCard}.
If the first guess was correct, then the person continues with guessing the number two, etc., i.e., as long as all previous such predictions turned out to be correct, the guesser makes a guess of the number $j$ for the $j$-th card. If such a prediction turns out to be wrong, i.e., gives a number $m>j$ for the $j$-th card, then we immediately can determine the two involved remaining subsequences $j, j+1, \dots, m-1$ and $m+1, \dots, n$, and all the numbers of the remaining cards are again guessed according to the proportions of the lengths of the remaining subsequences until no cards are left.

\item Case $0<p< \frac12$. First, we observe a transition of the first card, $\P\{\texttt{FC}=1\}$, at $n_0=\left\lfloor \frac{\ln(1/2-p)}{\ln(1-p)}\right\rfloor$ :
\[
\P\{\FC=1\}=p +(1-p)^n
\begin{cases}
<\frac12,\quad n>n_0\\
\ge \frac12 \quad n\le n_0.
\end{cases}
\]
This implies that we always compare first the values of $n$ and $n_0$. We start with the subcase of small $n\le n_0$, such that $p +(1-p)^n\ge 1/2$. Here, we continue exactly as in the case $p\ge \frac12$. For the subcase of intermediate $n$, such that $p +(1-p)^n<1/2$, the first guess depends on whether the maximum of $\binom{n-1}{m-1}p^{m-1}(1-p)^{n-m+1}$, $2\le m\le n$, basically the mode of a binomial distribution $B(n-1,p)$ with an additional factor $1-p$, is larger than $p +(1-p)^n$. If it is, we guess the shifted position $\kappa_n=1+\lfloor n\cdot p \rfloor$ of the mode, and if not, we guess the number one. 
If the guess of $\kappa_n$ was correct, we know again the proportions of the lengths of the remaining subsequences and guess accordingly. 
Similarly, if we have observed any number $m$, with $2\le m\le n$, then, we may immediately determine the two involved remaining subsequences $1, 2, \dots, m-1$ and $m,\dots, n$. All the numbers of the remaining cards are again guessed according to the proportions of the lengths of the remaining subsequences until no cards are left. On the other hand, if we observe the number one, then we continue with $n$ replaced by $n-1$, and the a deck of size $n-1$, labeled $2,3,\dots n$. 
Finally, in the subcase of large $n$ we have $p +(1-p)^n\approx p$, as $(1-p)^n$ decays exponentially. The maximum of $\binom{n-1}{m-1}p^{m-1}(1-p)^{n-m+1}$, $2\le m\le n$ tends to zero proportionally to $1/\sqrt{n}$ by the local limit theorem for the binomial distribution. Let $n_1$ denote the minimal $n\ge n_0$ such that $\P\{\FC=1\}\ge \max_{2\le m\le n}\P\{\FC=m\}$. Then, we guess again the number one for all large $n>n_1$. Let 
\begin{equation}
\label{eq:defMode}
\kappa_n=1+\lfloor np \rfloor=1+\text{mode}\big(B(n-1,p)\big).
\end{equation}
Moreover, let $A_n$ denote the non-random event 
\[
\Big\{\binom{n-1}{\kappa_n}p^{\kappa_n}(1-p)^{n-\kappa_n}<p +(1-p)^n\Big\}, 
\]
and $\mathbb{I}(A_n)$ its indicator.
We summarize the ranges and the optimal guessing strategy below.
\begin{table}[!htb]
\begin{tabular}{|c|c|c|c|}
\hline
Range & $1\le n\le n_0$ & $n_0<n\le n_1$ & $n_1<n$\\
\hline
Guess & $1$ & $\mathbb{I}(A_n)+(1-\mathbb{I}(A_n))\cdot\kappa_n$  & $1$ \\
\hline
\end{tabular}
\caption{Overview: optimal strategy for the first guess with $0<p<\frac12$}
\end{table}
\end{itemize}

\begin{prop}[Optimal strategy with an asymmetric Gilbert-Shannon-Read model]
\label{Prop:Optimal}
The strategy outlined in Subsection~\ref{SubSec:Optimal} is optimal, but not unique.
\end{prop}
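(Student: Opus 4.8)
The plan is to prove optimality by the standard argument for full‑feedback guessing games, in which the total reward decomposes over the individual guesses, and then to check state by state that the prescribed guess is always a mode of the conditional law of the next card; non‑uniqueness will follow by exhibiting a reachable state whose conditional law has two modes. Concretely, in the full‑feedback model the $\sigma$‑algebra $\mathcal{F}_{t-1}$ generated by the first $t-1$ revealed cards does not depend on the guesser's earlier guesses, so for \emph{any} strategy, writing $G_t$ for the $t$‑th guess (which is $\mathcal{F}_{t-1}$‑measurable) and $C_t$ for the $t$‑th card, $\E[\#\text{correct}]=\sum_{t=1}^{n}\E\big[\P\{C_t=G_t\mid\mathcal{F}_{t-1}\}\big]\le\sum_{t=1}^{n}\E\big[\max_{m}\P\{C_t=m\mid\mathcal{F}_{t-1}\}\big]$, with equality if and only if almost surely $G_t$ is a conditional mode of $C_t$ given $\mathcal{F}_{t-1}$; moreover these choices are unconstrained across $t$, since the right‑hand side does not depend on the strategy. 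Hence a strategy is optimal precisely when, at every reachable information state, it guesses a mode of the conditional distribution of the next card, and the optimal strategy fails to be unique precisely when some reachable conditional distribution has more than one mode.

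Next I would describe these conditional laws using the word/$\Rif_p$ descriptions of Lemma~\ref{lem:description}. Two situations occur. (i) Once some revealed card forces the cut position $k$ and forces the permutation to be non‑identity, every residual two‑run permutation compatible with the history carries the same $\Rif_p$‑weight $p^{k}(1-p)^{n-k}$; hence conditionally the remaining deck is a \emph{uniform} interleaving of the two explicitly known residual runs, say of lengths $\ell_1$ and $\ell_2$, the next card equals the head of run $i$ with probability $\ell_i/(\ell_1+\ell_2)$, and conditioning on one further card leaves a uniform interleaving of the (shorter) residual runs, so this picture is stable under further reveals. (ii) As long as only the sorted prefix $1,2,\dots,t-1$ has been seen, a direct computation with $\Rif_p$, parallel to the proof of Lemma~\ref{lem:firstCard}, shows that after relabeling the residual deck of size $n'=n-t+1$ by subtracting $t-1$, the conditional law of the next card equals a common positive multiple of the law of $\FCn{n'}$ on the values $2,\dots,n'$, together with an additional nonnegative mass, equal to $\big(\sum_{k=0}^{t-2}p^{k}(1-p)^{n-k}\big)\big/\P\{\text{prefix}\}$, placed on the value $1$, i.e.\ on the card $t$ itself.

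It then remains to match the prescribed guesses to modes of these laws. For the first guess the conditional law is $\FC$ from Lemma~\ref{lem:firstCard}: for $p\ge\tfrac12$, $\P\{\FC=1\}=p+(1-p)^n\ge\tfrac12$ forces $1$ to be a mode; for $p<\tfrac12$ one compares $p+(1-p)^n$, which is strictly decreasing in $n$ and crosses $\tfrac12$ at $n_0$, with $\max_{2\le m\le n}\P\{\FC=m\}=(1-p)\max_j\P\{B(n-1,p)=j\}$, which is attained at $m=\kappa_n$, is of order $n^{-1/2}$ by the local limit theorem for the binomial distribution, and is unimodal in $m$; this yields exactly the three ranges $1\le n\le n_0$, $n_0<n\le n_1$, $n_1<n$ and, in the middle range, the comparison encoded by $A_n$, so the tabulated first guess is a mode. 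After an out‑of‑order card, ``guessing according to the proportions'' means guessing the head of the longer residual run, which by (i) is a mode, with a tie exactly when the runs are equally long. In a sorted‑prefix state, by (ii) the guess $t$ carries at least the proportional mass of the value $1$ of the size‑$n'$ subproblem plus a nonnegative surplus, so whenever $1$ is a mode of $\FCn{n'}$ — in particular for $p\ge\tfrac12$, and for $p<\tfrac12$ whenever $n'\le n_0$ — the guess $t$ is a mode; in the remaining subranges one repeats the first‑guess comparison with $n$ replaced by $n'$, noting that the surplus mass is $O\big((1-p)^{n}\big)$ and hence asymptotically negligible against both $p+(1-p)^{n'}$ and the $\Theta(n^{-1/2})$ binomial peak, so the threshold picture, and with it the recursion $n\mapsto n-1$ upon re‑observing the card $1$, remains valid.

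Finally, non‑uniqueness is witnessed by any reachable state whose conditional law has two modes: the first‑card law with $n=20$, $p=0.1$ satisfies $\P\{\FCn{20}=2\}=\P\{\FCn{20}=3\}$ (Example~\ref{Ex:2}), and every out‑of‑order state with equal residual run lengths has two modes; any resolution of these ties produces an equally optimal strategy. I expect the main obstacle to be the case analysis for $0<p<\tfrac12$ in the previous paragraph: pinning down that the thresholds $n_0$, $n_1$ and the event $A_n$ exactly delineate the statement ``$1$ is a mode of $\FC$'', and, along the recursion, controlling the surplus mass on the top card — a geometric‑type tail — and verifying it cannot flip the mode except inside the regime where $1$ is already a mode for the trivial reason $p+(1-p)^{n'}\ge\tfrac12$.
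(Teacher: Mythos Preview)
Your approach --- reduce optimality to ``always guess a conditional mode'' and then verify the prescribed guesses state by state via Lemma~\ref{lem:firstCard} and the binomial mode --- is exactly the paper's, just executed with considerably more care. The paper's proof is very terse: it invokes the independence of $W_1$ from $W_2\dots W_n$ to assert that ``after having observing a one, the subword $W_2\dots W_n$ corresponds to a random word over $\{2,\dots,n\}$ of length $n-1$'', then appeals to Lemma~\ref{lem:firstCard} and the binomial mode for the first guess, and cites mode ties (Example~\ref{Ex:2} and the possibility $\P\{\FC=1\}=\max_{m\ge2}\P\{\FC=m\}$) for non-uniqueness. You go further by correctly distinguishing the event ``first card is $1$'' from ``$W_1=a$'' (they differ by the word $b^n$) and by tracking the resulting surplus posterior mass on the identity along a sorted prefix; the paper does not make this distinction.

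The obstacle you flag at the end --- that for $0<p<\tfrac12$ this surplus could in principle tip the conditional mode from $\kappa_{n'}$ back to $1$ at some intermediate $n'$, so that the recursion ``replace $n$ by $n-1$ and apply the $\FCn{n'}$ rule'' is not literally mode-selecting --- is a genuine soft spot, and your ``$O\big((1-p)^n\big)$ is asymptotically negligible against $\Theta(n^{-1/2})$'' does not close it for every finite $n$. But note that the paper's own proof does not close it either; it is simply hidden behind the (inexact) identification of the post-$1$ posterior with a fresh $\Rif_p$ on $n-1$ cards. So your proposal is at least as rigorous as the paper's proof, and you have put your finger on the one place where both arguments are loose.
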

\begin{remark}
Of course, it may happen that we always guess $1$, even for $0<p<\frac12$. We have not tried to determine if there is a threshold $0<p_t<\frac12$ 
such that for $p<p_t$ we only guess one and for $p_t<p$ we have $n_0<n_1$, such that we guess at least once a different value.
\end{remark}

\begin{proof}
The recursive nature of the guessing strategies is implied by Lemma~\ref{lem:description} 
and the construction by a random word, as the outcome of the first letter $W_1$ of $W=W_1W_2\dots W_n$ is independent of the remaining subword $W_2\dots W_n$.
Thus, after having observing a one, the subword $W_2\dots W_n$ corresponds to a random word over $\{2,\dots,n\}$ of length $n-1$.
Lemma~\ref{lem:firstCard} provides the probabilities of the first card observed. We already note that for $B=B(n-1,p)$ we have
\[
\P\{\FC=m\}=\binom{n-1}{m-1}p^{m-1}(1-p)^{n-m+1}=(1-p)\P\{B=m-1\},
\]
$2\le m\le n$. Also, the well known mode $\lfloor n p\rfloor$~\eqref{eq:defMode} of a binomial distribution $B(n-1,p)$ leads to the maximal value of $\P\{\FC=m\}$, $2\le m\le n$. This implies the optimality, as well always guess the card with the highest probability. Also, we observe that the optimal strategy may not be unique, as the mode may be obtained at two different values by standard properties of the binomial distribution for $n \cdot p\in \N$; compare with Example~\ref{Ex:2}. Moreover, it may also happen that $\P\{\FC=1\}=\max_{2\le m\le n}\P\{\FC=m\}$, adding to the non-uniquenes of the optimal strategy.
\end{proof}

\begin{example}
\label{Ex:3}
We consider the case $p=0.15$. Here, for $n\le n_0=\left\lfloor \frac{\ln(1/2-p)}{\ln(1-p)}\right\rfloor=6$ we have $\P\{\texttt{FC}=1\}\ge 0.5$.
For $n>n_0=6$ we have $\P\{\texttt{FC}=1\}< 0.5$. It is decreasing exponentially, whereas $\max_{2\le m\le n}\P\{\FC=m\}$ decreases proportionally to $1/\sqrt{n}$ by the local limit theorem for the binomial distribution. We guess one for $1\le n\le 14$, then we guess $\kappa_n=\lfloor n\cdot 0.15 \rfloor$ for $15\le n\le 39$, and again one for $n\ge 40$.
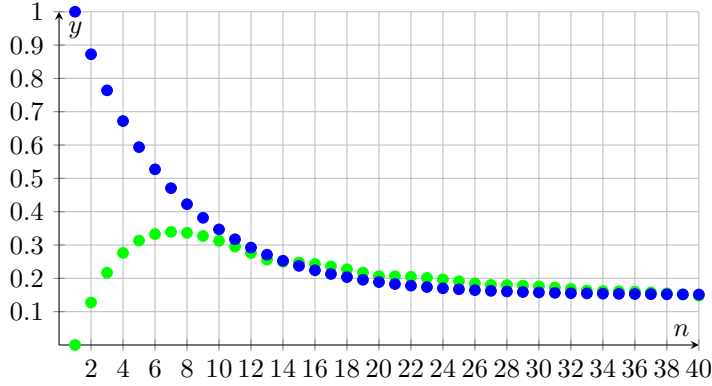
\begin{figure}[!htb]
   \begin{tikzpicture}
   \begin{axis}[
       width=10cm,
       height=6cm,
      xmin=0, xmax=40,
      ymin=0, ymax=1,
			xtick={0,2,...,40}, 
			ytick={0,0.1,...,1}, 
      xlabel={$n$},
      ylabel={$y$},
			grid=both,
      axis y line=center,
      axis x line=middle,
      ]
				 \addplot[green,only marks] table [x=x, y=y, col sep=comma] {unRifflePoints.csv};
				\addplot[samples at={0,...,40},only marks, mark size=2, blue]{0.15+0.85^x};
   \end{axis}
   \end{tikzpicture}
	\caption{Comparison of the values $\P\{\FC=1\}=0.15+0.85^n$ and $\max_{2\le m\le n}\P\{\FC=m\}$, $1\le n\le 40$.}
	\end{figure}
\end{example}

\subsection{Distributional equation}
\begin{theorem}[One-time asymmetric riffle shuffle: distribution]
\label{the:distDecomp}
Let $\kappa_n=1+\lfloor n p \rfloor$ and let $A_n$ denote the non-random event 
\[
\Big\{\binom{n-1}{\kappa_n}p^{\kappa_n}(1-p)^{n-\kappa_n}<p +(1-p)^n\Big\}, 
\]
and $\mathbb{I}(A_n)$ its indicator. The random variable $X=X_n$ of correctly guessed cards, starting with a deck of $n$ cards, after a one-time asymmetric riffle shuffle with parameter $p\in(0,1)$ satisfies for $n\ge 2$ the following distributional equation:
\begin{equation}\label{eqn:Xn_DistEqn}
\begin{split}
X_n &\law I_1 \big(X^{\ast}_{n-1}+\mathbb{I}(A)\big)+(1-I_1)I_2\cdot n \\
&\,\,+ (1-I_1)(1-I_2)\cdot \Big(C_{n-1-J_n,J_n}+(1-\mathbb{I}(A_n))\mathbb{I}(J_n=n-1-\kappa_n)\Big).
\end{split}
\end{equation}
The random variables appearing in the decomposition are defined as follows: $I_1\law \Be(p)$, $I_2\law \Be((1-p)^{n-1})$ and $C_{m_1,m_2}$ denotes the number of correct guesses in a two-color card guessing game, with $\Be(q)$ denoting a Bernoulli distribution, such that $P\{I=1\}=q$ and $P\{I=0\}=1-q$ for $I\law \Be(q)$. Additionally, $X^{\ast}_{n-1}$ is an independent copy of $X$ defined on $n-1$ cards. 
Moreover, $J_n\law \Bin_{\text{pos}}(n-1,q)$ denotes a binomial distribution, conditioned to be positive:
\[
\P\{J_n=j\}=\binom{n-1}{j}p^j(1-p)^{n-1-j}/(1-(1-p)^{n-1}),\quad  1\le j \le n-1.
\]
All random variables $I_1$, $I_2$, $J_n$, as well as $C_{m_1,m_2}$ are mutually independent. The initial value is $X_1=1$.
\end{theorem}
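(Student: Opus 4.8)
The plan is to derive the distributional equation \eqref{eqn:Xn_DistEqn} by conditioning on the first letter $W_1$ of the random word $W=W_1W_2\dots W_n$ from Lemma~\ref{lem:description}(3), and then translating the outcome of the first guess into the remaining guessing sub-problem. First I would set up the dichotomy: with probability $p$ we have $W_1=a$, so the first drawn card is $1$; and with probability $1-p$ we have $W_1=b$, so the first card is the smallest label of the $b$-block, which, conditioned on there being $k-1$ letters $a$ among $W_2\dots W_n$ (equivalently the cut at position $k-1$), equals $k$. This is exactly the decomposition underlying Lemma~\ref{lem:firstCard}: the event $\{W_1=a\}\cup\{W=b\cdots b\}$ contributes the $p+(1-p)^n$ mass to $\{\FC=1\}$, and it is convenient to split $\{W_1=b\}$ further into the sub-case $W_2\dots W_n = b\cdots b$ (all $b$'s, probability $(1-p)^{n-1}$ given $W_1=b$, in which case the whole deck is the identity and every remaining guess $2,3,\dots,n$ is correct, yielding $n$ correct guesses in total) and the complementary sub-case where $W_2\dots W_n$ contains at least one $a$. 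This is the source of the three terms indexed by $I_1\law\Be(p)$ and $I_2\law\Be((1-p)^{n-1})$.

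Next I would handle each branch. On $\{I_1=1\}$ (first card is $1$): if the optimal strategy guesses $1$ first — which is governed by the indicator, here one must be a little careful about the bookkeeping — then the guess is correct, contributing $\mathbb{I}(A)$ depending on whether the first guess was $1$ (when $A_n$ holds and, consistently with Subsection~\ref{SubSec:Optimal}, also in the ranges $n\le n_0$ and $n>n_1$), plus the independence of $W_1$ from $W_2\dots W_n$ lets us replace the residual problem by a fresh copy $X^{\ast}_{n-1}$ of $X$ on the relabeled deck $\{2,\dots,n\}$ of size $n-1$. On $\{I_1=0\}$ the first card is some $m\ge 2$; from the outcome $m$ the guesser immediately learns the two increasing subsequences $1,2,\dots,m-1$ and $m,m+1,\dots,n$, so the remaining $n-1$ cards form a two-color merge, and the number of further correct guesses is distributed as $C_{m_1,m_2}$ with block sizes $m_1=m-1$ and $m_2=n-m$. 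Writing $J_n=m-1$ for the number of letters $a$ (equivalently the cut position), and using that conditioning on $W_1=b$ forces $J_n\ge 1$ with the displayed conditional-binomial law, we get $m_1=J_n$, $m_2=n-1-J_n$, hence the term $C_{n-1-J_n,J_n}$. Finally, on the sub-event $\{I_2=0\}$ within $\{I_1=0\}$, one must add back the correctness of the first guess itself when the strategy guessed $\kappa_n$ and it hit: this is the event $\{J_n=n-1-\kappa_n\}$ — wait, one should double-check: the guess $\kappa_n=1+\lfloor np\rfloor$ is the first card iff $m=\kappa_n$ iff $J_n=\kappa_n-1$; I would reconcile this indexing with the stated $\mathbb{I}(J_n=n-1-\kappa_n)$, which presumably reflects the convention $m_1=J_n$ counting the $a$-block from the other end, and adjust the narrative so that the term $(1-\mathbb{I}(A_n))\mathbb{I}(J_n=\dots)$ precisely records "we guessed $\kappa_n$ (because $A_n$ failed) and it was correct".

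The independence assertions then follow from the same source: $W_1$ is independent of $W_2\dots W_n$, so $I_1$ is independent of everything built from the residual word; within the residual word, the number of $a$'s ($J_n$ or its shift) and the subsequent interleaving order are the standard ingredients of the two-color game, so $C_{m_1,m_2}$ is a function of an independent randomness given the block sizes, and $I_2$ is the indicator of the degenerate "all $b$" residual word, measurable with respect to the block-size data. The events $A_n$ are non-random, so they carry no probabilistic dependence. I would assemble these pieces into \eqref{eqn:Xn_DistEqn} and note the base case $X_1=1$ (a single card is always guessed correctly).

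The main obstacle I anticipate is the careful bookkeeping of the first-guess correctness across the several regimes of Subsection~\ref{SubSec:Optimal} (the ranges $n\le n_0$, $n_0<n\le n_1$, $n_1<n$, collapsed into the single indicator $\mathbb{I}(A_n)$), and in particular making sure the additive corrections $\mathbb{I}(A)$ on the $I_1=1$ branch and $(1-\mathbb{I}(A_n))\mathbb{I}(J_n=n-1-\kappa_n)$ on the $I_1=0,I_2=0$ branch count the first guess exactly once and with the right conditioning — this requires pinning down the indexing convention relating the guessed value $\kappa_n$, the observed card $m$, the cut position, and the argument of $C_{m_1,m_2}$, so that the two-color game is fed the block sizes in the orientation consistent with how $C$ is defined in the cited two-color analysis.
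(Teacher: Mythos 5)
Your proposal matches the paper's own proof essentially step for step: both condition on the first letter $W_1$ of the random word from Lemma~\ref{lem:description}, split off the all-$b$ word (the identity permutation, giving the $(1-I_1)I_2\cdot n$ term), identify the $W_1=a$ branch with an independent copy $X^{\ast}_{n-1}$ plus the indicator $\mathbb{I}(A_n)$ for the first guess, and recognize the $W_1=b$ branch as the two-color game $C_{n-1-J_n,J_n}$ with the extra correction term when the guess $\kappa_n$ happens to be the first card. The indexing discrepancy you flag (your computed $J_n=\kappa_n-1$ versus the stated $\mathbb{I}(J_n=n-1-\kappa_n)$) is a real quirk of the theorem's bookkeeping conventions rather than a flaw in your argument; the paper's proof glosses over it in exactly the same place.
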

\begin{remark}
As pointed out before, in the case $p\ge \frac12$ it holds $\forall n\in\N\colon \mathbb{I}(A_n)=1$ and the recurrence relation simplifies accordingly.
Moreover, let $n_0=\left\lfloor \frac{\ln(1/2-p)}{\ln(1-p)}\right\rfloor$. Then, $\forall n\le n_0\colon \mathbb{I}(A_n)=1$, regardless of the value of $p\in(0,1)$.
\end{remark}
\begin{remark}[Two-color card guessing and asymmetric riffle shuffle]
The random variable $C_{m_1,m_2}$ links the two-color card guessing game (with complete feedback) starting with $m_{1}$ cards of type (color) $a$ and $m_{2}$ cards of type (color) $b$ with the asymmetric riffle shuffle. Apparently, it corresponds to the guessing game for the letters of a word over the alphabet $\{a,b\}$ consisting of $m_{1}$ $a$'s and $m_{2}$ $b$'s, when $m_1,m_2$ is known to the guesses. This link was already observed in the special case $p=\frac12$, see~\cite{KuPa2025}. We refer the reader to~\cite{DiaconisGraham1981,KnoPro2001} for more background on the guessing game and to~\cite{KP2024,KuPanPro2009} for the limit laws of $C_{m_1,m_2}$.
In most works considering $C_{m_1,m_2}$ it is assumed without loss of generality that $m_1\ge m_2\ge 0$. 
However, by definition of the two-color card guessing game the order of the parameters is irrelevant under the optimal strategy: $C_{m_1,m_2}=C_{m_2,m_1}$.
\end{remark}
\begin{proof}
The proof of this recurrence is based on Lemma~\ref{lem:firstCard} and Proposition~\ref{Prop:Optimal}. We use the probabilistic description of once-shuffled decks of $n$ cards in terms a random length-$n$ words $W_1\dots W_n$, as stated in Lemma~\ref{lem:description}. We count the number of correct guesses, where we distinguish according to the first letter $W_{1}$, the value of $0<p<1$, 
as well as the numbers $n$ and $n_0$, as described in detail by the optimal strategy. 
Assume that $\frac12 \le p <1$. If $W_{1}=a$, then the first drawn card is $1$.
This card will be predicted correctly by the guesser. The person keeps his optimal strategy of guessing for the deck of remaining cards, which is order-isomorphic to a deck of $n-1$ cards generated by the random length-$(n-1)$ word $W_{2} \dots W_{n}$ of length $n-1$, independent of $W_1$. This corresponds to the summand $I_1 \big(X^{\ast}_{n-1}+\mathbb{I}(A)\big)$.
If $W_{1}=b$, then we first consider the particular case $W_1\dots W_n=b^{n}$, i.e., that the cut of the deck has been at $0$. 
Since in this case the deck of cards corresponds to the identity permutation $\text{id}_{n}$, the guesser will predict all cards correctly using the optimal strategy. This amounts to the summand $(1-I_1)I_2\cdot n$. Apart from this particular case, $W_{1}=b$ corresponds to a deck of cards where the first card is $m \ge 2$. Of course, this event causes a wrong prediction by the guesser.
However, due to complete feedback, now the guesser knows that the cut is at $m-1$, or in alternative terms, he knows that the remaining deck is generated from a word $W_{2} \dots W_{n}$ that has $j:=n-m$ $b$'s and $n-1-j=m-1$ $a$'s, with $0 \le j \le n-2$. From this point on the guesser changes the strategy, which again could be formulated in alternative terms by saying that the guesser makes a guess for the next letter in the word, in a way that the guess is $a$ if the number of $a$'s exceeds the number of $b$'s in the remaining subword, that the guess is $b$ in the opposite case, and (in order to keep the outcome deterministic) that the guess is $a$ if there is a draw between the number of $a$'s and $b$'s. These considerations yield the third summand $ (1-I_1)(1-I_2)\cdot C_{n-1-J_n,J_n}$.
For $0<p<\frac12$. we proceed by looking at the event $A_n$.
 For $\mathbb{I}(A_n)=1$ we guess the value one and proceed identically to $\frac12 \le p <1$. On the other hand,
if  $\mathbb{I}(A_n)=0$ we guess the value $\kappa_n$. If $W_{1}=a$, then the first drawn card is $1$, but
the prediction will be wrong. The person keeps his optimal strategy of guessing for the deck of remaining cards.
If $W_{1}=b$, the guesser knows that the cut is at $m-1$ and the composition of the subword $W_{2} \dots W_{n}$.
Depending on the position of the cut, the event $W_{1}=b$ corresponds to a different number. The number of correct guesses may increase, which leads to the term $(1-\mathbb{I}(A_n))\mathbb{I}(J_n=n-1-\kappa_n)$.
\end{proof}

\section{Limit law \label{sec:LimitLaw}}
\begin{theorem}         
\label{the:1}  
Given $p\in(0,1)\setminus\{\frac12\}$, let $\pe=\max\{p,1-p\}$. The shifted random variable $X_n-n\cdot p^{\ast}$, with $X_n$ counting the number of correct guessing with full feedback after an asymmetric riffle shuffle, converges in distribution to a geometric limit law:
\[
X_n -n\cdot p^{\ast} \to G, \quad \P\{G=k\}=\rho^k(1-\rho),\quad k\ge 0,
\]
with $\rho=(1-\pe)/\pe$.
\end{theorem}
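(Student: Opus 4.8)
The plan is to work directly with the distributional recurrence~\eqref{eqn:Xn_DistEqn} from Theorem~\ref{the:distDecomp}, treating the cases $\frac12<p<1$ and $0<p<\frac12$ in parallel, since by symmetry of the two-colour game ($C_{m_1,m_2}=C_{m_2,m_1}$) one expects the limit to depend only on $\pe$. Set $\rho=(1-\pe)/\pe$. First I would observe that for all $n$ beyond some fixed $n_1(p)$ we have $\mathbb{I}(A_n)=1$, so the recurrence collapses to $X_n\law I_1(X^{\ast}_{n-1}+1)+(1-I_1)I_2\, n+(1-I_1)(1-I_2)\,C_{n-1-J_n,J_n}$ with $I_1\law\Be(p)$. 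The term $(1-I_1)I_2\, n$ has probability $(1-p)^n\to 0$ of being nonzero, so it is negligible in the limit and can be discarded after a routine estimate. This leaves, with probability $p$, the "good" branch $X^{\ast}_{n-1}+1$, and with probability $1-p$ the "bad" branch governed by $C_{n-1-J_n,J_n}$ where $J_n$ is essentially $\Bin(n-1,1-p)$ (for $p>\frac12$) concentrated near $(1-p)n$, i.e. the smaller colour class has size $\approx(1-p)n$ and the larger $\approx pn$.

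The key analytic input I would bring in is the known asymptotics for the two-colour guessing variable $C_{m_1,m_2}$ with $m_1\ge m_2$: the optimal number of correct guesses is $m_1 + (\text{a tight term})$, and more precisely $C_{m_1,m_2}-m_1$ converges in distribution (and its shifted version stays tight) when $m_2/m_1$ is bounded away from $1$ — here $m_1\approx pn$, $m_2\approx(1-p)n$, ratio $\to (1-p)/p=\rho<1$. Citing~\cite{KP2024,KuPanPro2009}, the bad branch contributes $C_{n-1-J_n,J_n}\approx \max(n-1-J_n,J_n)+O_{\P}(1) = (n-1-J_n)+O_{\P}(1)$ since $n-1-J_n\approx pn$ is the majority part; crucially $n-1-J_n$ has fluctuations of order $\sqrt n$, but I only need that $C_{n-1-J_n,J_n}-(n-1-J_n)$ is tight and that $(n-1)\pe - (n-1-J_n)$ is also controlled — in fact $n-1-J_n$ versus $(n-1)p$ differ by $O_{\P}(\sqrt n)$, which would be fatal unless the geometric structure absorbs it. The resolution (and this is the delicate point) is that on the bad branch the relevant quantity relative to the target $n p^{\ast}$ is not $C_{n-1-J_n,J_n}$ alone but combined with the fact that after a bad guess we land in a pure two-colour game whose majority-part deficit from $np^{\ast}$ must be shown to be $O_{\P}(1)$; this forces me to use the finer result that in the two-colour game with colour sizes summing to $N$ and majority $M$, the count is $M + G' + o_{\P}(1)$ with $G'$ geometric-like and, more importantly, that $C_{m_1,m_2}+ m_2 = N - (\text{tight})$ type identities hold so that $np^{\ast}-C \approx (np^{\ast}-M)+\text{tight}$, and $np^{\ast}-M$ is governed by $J_n$'s deviation — so I would instead track $n\pe - X_n$ and show it satisfies a contractive recurrence.

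Concretely, let $Y_n := n\pe - X_n$ (for $p>\frac12$, so $\pe=p$; symmetric otherwise). The recurrence becomes, ignoring the negligible $I_2$ branch, $Y_n \law I_1\,(Y^{\ast}_{n-1} + (\pe - 1 +1)) + (1-I_1)\big((n\pe - (n-1-J_n)) - (C_{n-1-J_n,J_n}-(n-1-J_n))\big)$; on the good branch ($I_1=1$, prob $p=\pe$) one gets $Y_n = Y^{\ast}_{n-1}$ exactly (the $+1$ guess cancels the $+\pe=+p$... here I must be careful: with prob $\pe$, $X_n=X^{\ast}_{n-1}+1$ so $Y_n = \pe + Y^{\ast}_{n-1} -1 = Y^{\ast}_{n-1}-(1-\pe)$, a deterministic downward shift), and on the bad branch ($I_1=0$, prob $1-\pe$) $Y_n$ equals a tight random variable $R_n$ (this is where I invoke that $n\pe - (n-1-J_n)$ is $O_{\P}(\sqrt n)$ — and here is the genuine obstacle). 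I expect the true argument must show that on the bad branch we do NOT simply have a fresh two-colour game but the guesser has already consumed cards, and the correct bookkeeping makes $J_n$'s $\sqrt n$-fluctuation irrelevant because what matters is $m_2=J_n$ itself (the minority count gets fully "lost"), giving $Y_n = J_n + (\text{tight}) - $ wait — rather $X_n$ on the bad branch $= C_{n-1-J_n,J_n}$ and $n\pe - C_{n-1-J_n,J_n} = (np - (n-1-J_n)) + ((n-1-J_n) - C_{n-1-J_n,J_n})$; the second bracket is $-m_2 + (\text{tight}) = -J_n + (\text{tight})$, and $np-(n-1-J_n) = J_n - n(1-p) + 1$, so the $J_n$'s cancel and $Y_n = (1 - n(1-p)) + n(1-p) + \text{tight}$? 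This cancellation is exactly what I must verify carefully — if it works, $Y_n$ on the bad branch is tight, and then the recurrence $Y_n \law \mathbf{1}_{\{I_1=1\}}(Y^{\ast}_{n-1} - (1-\pe)) + \mathbf{1}_{\{I_1=0\}} R_n$ is a perturbed random-walk-with-resets whose stationary law is geometric with ratio $\rho=(1-\pe)/\pe$: each "good" step pushes down, each "bad" step resets to an $O(1)$ value, and the number of consecutive good steps before a reset is geometric, yielding $\P\{G=k\}=\rho^k(1-\rho)$. I would finish by a standard fixed-point / contraction argument in a suitable metric (e.g. Wasserstein or the Zolotarev $\zeta$-metric) to upgrade "stationary law of the limiting recurrence" to "$Y_n$ converges to it", checking tightness of $(Y_n)$ first so that subsequential limits exist and are all the geometric law. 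The main obstacle, to be explicit, is the cancellation in the bad branch: proving that $n\pe - C_{n-1-J_n,J_n}$ is tight (not merely $O_{\P}(\sqrt n)$), which requires the sharp two-colour asymptotic $C_{m_1,m_2} = m_1 + O_{\P}(1)$ with the $O_{\P}(1)$ uniform as $m_1/m_2$ stays bounded, together with the algebraic identity $np - (n-1-J_n) + (n-1-J_n) - m_1 = np - m_1$ collapsing because $m_1 = n-1-J_n$.
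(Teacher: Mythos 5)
Your proposal follows the same skeleton as the paper's argument (the distributional equation~\eqref{eqn:Xn_DistEqn}, discarding the $(1-I_1)I_2\,n$ branch, a geometric number of ``good'' steps before the first letter $b$, then the two-colour stage), but it stalls exactly at the step that carries the whole theorem: showing that $n\pe-C_{n-1-J_n,J_n}$ is tight with a geometric limit. In the paper this is precisely Lemma~\ref{lem:1}, proved by conditioning on $J_n=np+t\sigma_n$, removing large $|t|$ via Chernoff bounds and a local limit theorem, and then invoking the geometric regime of Theorem~\ref{the:KPP} with the assertion that $\P\{C_{n-1-j,j}=n\pe+k\}\to\rho^k(1-\rho)$ ``independent of $t$''. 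You do not supply any substitute for this lemma, and the cancellation you try to manufacture instead is incorrect: for $p>\frac12$ the majority class in $C_{n-1-J_n,J_n}$ is $J_n$ itself, so Theorem~\ref{the:KPP} gives $C_{n-1-J_n,J_n}=J_n+O_{\P}(1)$ and hence $n\pe-C_{n-1-J_n,J_n}=(np-J_n)+O_{\P}(1)$; the $J_n$'s do not cancel, and your claim that the second bracket equals $-J_n+{}$tight would force $C_{n-1-J_n,J_n}\approx n-O_{\P}(1)$, which is false. So, as written, the proposal has a genuine gap at its central step, and the concluding contraction/fixed-point argument has nothing to contract onto.

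That said, the obstacle you single out is substantive and not a mere technicality of your write-up. Since $C_{m_1,m_2}\ge\max\{m_1,m_2\}$ pathwise, one has $C_{n-1-J_n,J_n}\ge J_n$, and $J_n-np$ has two-sided Gaussian fluctuations of order $\sqrt n$; consequently $\P\{C_{n-1-J_n,J_n}\ge n\pe+\sqrt n\}$ stays bounded away from zero (for $p>\frac12$, and symmetrically for $p<\frac12$ via $n-1-J_n$), and likewise $X_n\ge\Cut$ pathwise under the strategy, since after the first mistake the guesser collects at least the majority of the remaining colours. This is hard to reconcile with centering at the deterministic $n\pe$: the step in the proof of Lemma~\ref{lem:1} where the geometric law of Theorem~\ref{the:KPP}, centered at the random majority size $j=np+t\sigma_n$, is used as if centered at $n\pe$ (``independent of $t$'') shifts the argument by $t\sigma_n=\Theta(\sqrt n)$ for every fixed $t\ne0$ and therefore requires substantially more justification than the regional limit law provides. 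In short, your proposal is incomplete exactly where you say it is, but the missing tightness statement is also the delicate point of the paper's own Lemma~\ref{lem:1}, and your suspicion that the $\sqrt n$ fluctuation of $J_n$ is not absorbed by the geometric structure is well founded rather than an oversight on your part.
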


\begin{remark}

For $p\in\{0,\frac12,1\}$ the value of $\rho$ degenerates to one and thus the limit law has to be different; see the introduction for a summary of the result for $p=\frac12$~\cite{KuPa2025},
as well as the boundary case.
\end{remark}
A crucial ingredient for the proof of Theorem~\ref{the:1} is the limit law for $C_{m_1,m_2}$, when $m_2$ is proportional to $m_1$. 
This is then used to understand the random variable $C_{n-1-J_n,J_n}$\eqref{eqn:Xn_DistEqn}. 
\begin{lem}
\label{lem:1}
Given $p\in (0,1)\setminus\{\frac12\}$, let $p^{\ast}=\max\{p,1-p\}$. The random variable $C_{n-1-J_n,J_n}$ has, for $n\to \infty$ a geometric limit law:
\[
C_{n-1-J_n,J_n}- n\cdot p^{\ast}\to G,\quad \P\{G=k\}=\rho^k(1-\rho),\quad k\ge 0.
\]
with $\rho=(1-p^{\ast})/p^{\ast}$.
\end{lem}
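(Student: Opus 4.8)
The strategy is to analyse the two--colour variable $C_{m_1,m_2}$ for \emph{deterministic} arguments first, show that its fluctuations about the size of the larger pile are geometric in the ``proportional'' regime, and only then average over the random pile sizes $n-1-J_n$ and $J_n$. The starting observation is the structural identity for the two--colour guessing game under the optimal majority strategy: $C_{m_1,m_2}\law\max(m_1,m_2)+R_{m_1,m_2}$, where, for a uniformly random arrangement of the two colours, $R_{m_1,m_2}\ge 0$ counts the number of times the associated $\pm1$ lattice path returns strictly below its initial height. Equivalently $R_{m_1,m_2}$ is determined by its tail
\[
\P\{R_{m_1,m_2}\ge k\}=\binom{m_1+m_2}{\min(m_1,m_2)-k}\Big/\binom{m_1+m_2}{\min(m_1,m_2)},\qquad 0\le k\le\min(m_1,m_2),
\]
which is exactly the distribution of $C_{m_1,m_2}$ recorded in \cite{DiaconisGraham1981,KnoPro2001,KP2024,KuPanPro2009}; a self--contained derivation of the count $\binom{m_1+m_2}{\min-k}$ of arrangements whose path dips below the start at least $k$ times is available via an iterated reflection (cycle--lemma) argument, which I would include only if a reference proof is not considered sufficient.

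Writing $m=\min(m_1,m_2)$, $M=\max(m_1,m_2)$, the tail telescopes as $\P\{R_{m_1,m_2}\ge k\}=\prod_{i=0}^{k-1}\frac{m-i}{M+1+i}$. Hence, if $m_1,m_2\to\infty$ with $m/M\to\rho\in(0,1)$, then for every fixed $k$ one has $\P\{R_{m_1,m_2}\ge k\}\to\rho^{k}$, i.e.\ $C_{m_1,m_2}-\max(m_1,m_2)$ converges in law to $\P\{G=k\}=\rho^{k}(1-\rho)$. Each factor above is bounded and monotone in $m/M$, so this convergence is moreover \emph{uniform} over all pairs with $|m/M-\rho|\le\varepsilon$ once $M\ge M_0(\varepsilon)$; this uniformity is what will allow the passage to random arguments.

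To finish, take $p<\tfrac12$ (the case $p>\tfrac12$ is symmetric, with the roles of the two piles exchanged), so $p^{\ast}=1-p$ and $\rho=(1-p^{\ast})/p^{\ast}=p/(1-p)$. As $J_n$ is a $\Bin(n-1,p)$ conditioned to be positive, $J_n/(n-1)\to p$ in probability; fixing $\delta\in(0,\tfrac12)$ and setting $B_n=\{\,|J_n-(n-1)p|\le n^{1/2+\delta}\,\}$, one has $\P(B_n)\to1$, and on $B_n$ the larger pile is $n-1-J_n$ while $\min(n-1-J_n,J_n)/\max(n-1-J_n,J_n)=J_n/(n-1-J_n)\to p/(1-p)=\rho$ uniformly. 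Conditioning on the value of $J_n$, applying the uniform convergence of the previous paragraph on $B_n$, and controlling the negligible part off $B_n$ via $\P(B_n^{c})\to0$ together with $C_{n-1-J_n,J_n}\ge n-1-J_n$, yields $C_{n-1-J_n,J_n}-\max(n-1-J_n,J_n)\claw G$ with the stated $\rho$; recentering by $n\,p^{\ast}$, using that on $B_n$ the larger pile equals $n-1-J_n$, then gives the form asserted in the lemma.

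I expect the main obstacle to be exactly this last passage, Step~3: transporting a distributional limit valid for deterministic, asymptotically proportional arguments to the randomly sized piles $n-1-J_n$ and $J_n$. It hinges on the uniformity extracted from the telescoping product and on sharp enough tail control for $J_n$, and the bookkeeping needed to line the geometric limit up with the deterministic centering $n\,p^{\ast}$ (rather than with the random size of the larger pile) is the point at which the argument has to be carried out most carefully.
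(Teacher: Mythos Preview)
Your first two paragraphs are correct and give a more transparent route than the paper's: you exhibit the exact tail $\P\{R_{m_1,m_2}\ge k\}=\prod_{i=0}^{k-1}(m-i)/(M+1+i)$ and read off the geometric limit directly, with the needed uniformity in the ratio $m/M$ falling out of the product form. The paper instead invokes the proportional--regime case of Theorem~\ref{the:KPP} as a black box and reaches it through a local normal approximation of $J_n$, an Euler--Maclaurin replacement of the conditioning sum by an integral, and a Chernoff--bound splitting of that integral.

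The genuine gap is precisely at the step you yourself flag. You pass from $C-\max\claw G$ to $C-np^{\ast}\claw G$ by ``recentering'', but this does not go through: on $B_n$ with, say, $p<\tfrac12$, one has $\max-np^{\ast}=(n-1-J_n)-n(1-p)=np-1-J_n$, whose standard deviation $\sqrt{(n-1)p(1-p)}$ tends to infinity. Hence $C-np^{\ast}=(C-\max)+(\max-np^{\ast})$ is the sum of a tight sequence and one with fluctuations of order $\sqrt{n}$; since $R=C-\max\ge 0$ is tight while $\max-np^{\ast}$ is asymptotically centered normal on scale $\sqrt{n}$, one gets $\P\{C-np^{\ast}<0\}\to\tfrac12$, which is incompatible with convergence to the nonnegative law $G$. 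The paper's proof contains the same slip at the point where it asserts $\P\{C_{n-1-j,j}=np^{\ast}+k\}\to\rho^{k}(1-\rho)$ ``independent of $t$'' for $j=np+t\sigma_n$: for any fixed $t\neq0$ this probability in fact tends to $0$, because the deterministic centering differs from $\max(n-1-j,j)$ by $|t|\sigma_n+O(1)\to\infty$. So the obstacle you identify is not a matter of careful bookkeeping; what your argument actually establishes is the (correct) statement $C_{n-1-J_n,J_n}-\max(n-1-J_n,J_n)\claw G$, and the deterministic centering $np^{\ast}$ in the lemma as stated appears to require correction.
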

\begin{remark}[Limit law and a fixed-point equation]
It turns out that $C_{n-1-J_n,J_n}$ has the same limit law as $X_n$. 
In order to shed more light on the fact that the limit laws of $C_{n-1-J_n,J_n}$ and $X_n$ coincide, 
we give the following non-rigorous motivation (compare with~\cite{KuPa2025} for the case $p=1/2$):
starting from the distributional equation~\eqref{eqn:Xn_DistEqn}, we subtract on both sides $n p^{\ast}$. 
The shifted random variable $Y_n=X_n-n p^{\ast}$ satisfies for large $n$ and any $0<p<1$, omitting asymptotically negligible terms, the equation
\[
Y_n \sim I_1 \cdot Y^{\ast}_{n-1} + (1-I_1)\big(C_{n-1-J_n,J_n}-np^{\ast}\big),
\]
where $I_1=\Be(p)$. Thus, using the fact that $C_{n-1-J_n,J_n}-np^{\ast}\to G$ and the assumption that $Y_n$ convergences to a limit law $Y$ would imply a sort of fixed-point equation for $Y$:
\[
Y \law I_1 \cdot Y^{\ast} + (1-I_1)\cdot G, 
\]
where $Y^{\ast}\law Y$ denotes an independent copy of $Y$ and $G$, which has (at least) the solution $Y\law G$.
Of course, making this argument rigorous would require more effort, compare with the contraction method~\cite{ContrNR,ContrRR}. 
\end{remark}

As we will later require all the different ranges of $m_1,m_2$, we collect here the results of~\cite{KuPa2025,KuPanPro2009}
\begin{theorem}[\cite{KuPa2025,KuPanPro2009}]
\label{the:KPP}
The random variable $C_{m_1,m_2}$ counting the number of correct guesses in the card
guessing game starting with $m_1$ red and $m_2$ black cards has for $m_1\ge m_2$, with $m_1\to \infty$,
the following limit laws, with $\hat{C}_{m_1,m_2}=C_{m_1,m_2}-m_1$ and $\Delta_m=m_1-m_2$

\begin{itemize}
\item The region $m_2=o(m_1)$: the centered random variable $\hat{C}_{m_1,m_2}$ is asymptotically zero:
		\begin{equation*}
		\P\{\hat{C}_{m_1,m_2}=0\}=1-\frac{m_2}{m_1+1}\sim 1.
		\end{equation*}
\item The region $m_2\sim\rho m_1$, with $0<\rho<1$: the centered random variable $C_{m_1,m_2}^{\ast}$ is asymptotically
geometrically distributed with parameter $\rho$:
		\begin{equation*}
		\P\{\hat{C}_{m_1,m_2}=k\}\sim(1-\rho)\rho^k, \quad k\in\N_0.
		\end{equation*}
\item The region $m_1=o(\Delta_m^2)$: the centered and scaled random variable
$\frac{\Delta_m}{m_1}\hat{C}_{m_1,m_2}$ is asymptotically exponential distributed:
\begin{equation*}
\frac{m_1}{\Delta_m}\P\{\hat{C}_{m_1,m_2}^{\ast}=x\} \sim e^{-x},\quad x\ge 0.
\end{equation*}

\item The region $\Delta_m \sim \rho\sqrt{m_1}$ and $\rho > 0$: the centered and scaled random variable
$\hat{C}_{m_1,m_2}^{\ast}\sqrt{m_1}$ weakly converges to a $\text{LinExp}$ distributed random variable $L_{\rho}$ with density $f_{L_{\rho}}(x)$,
\begin{equation*}
\sqrt{m_1}\,\P\{\hat{C}_{m_1,m_2}^{\ast}=x\} \sim f_{L_{\rho}}(x)= (\rho+2x)e^{-x(\rho+x)},\quad x\ge 0.
\end{equation*}
\item The region $\Delta_m=o(\sqrt{m_1})$: the centered and scaled random variable
$\hat{C}_{m,n}^{\ast}\sqrt{m}$ is asymptotically Rayleigh distributed:
\begin{equation*}
\sqrt{m_1}\,\P\{\hat{C}_{m_1,m_2}=x\} \sim 2xe^{-x^2},\quad x\ge 0.
\end{equation*}
\end{itemize}
\end{theorem}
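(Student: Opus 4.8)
The plan is to condition on $J_n$, which turns the randomly split game $C_{n-1-J_n,J_n}$ into a deterministic two-colour game covered by Theorem~\ref{the:KPP}, and then to average the conditional limit over the law of $J_n$.

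The engine I would use is the exact tail of the two-colour excess. For colour counts $m_1\ge m_2$ and $\hat C_{m_1,m_2}=C_{m_1,m_2}-m_1$, the ballot structure of the optimal majority strategy yields the closed form $\P\{\hat C_{m_1,m_2}\ge k\}=\fallfak{m_2}{k}/\auffak{(m_1+1)}{k}$; for $k=1$ this is $m_2/(m_1+1)$, matching the first bullet of Theorem~\ref{the:KPP}, and as $m_1\to\infty$ with $m_2\sim\rho m_1$ it tends to $\rho^k$, recovering the geometric regime. One may equally quote that bullet of Theorem~\ref{the:KPP} directly and skip the formula.

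I would then exploit the concentration of $J_n$. Since $J_n\law\Bin_{\text{pos}}(n-1,p)$ differs from $\Bin(n-1,p)$ only through the factor $1-(1-p)^{n-1}\to 1$, a Chernoff bound confines $J_n$ to the window $W_n=\{j:\ |j-(n-1)p|\le n^{2/3}\}$ with probability $1-o(1)$. For $p\neq\frac12$ and $j\in W_n$ the maximiser of $\{n-1-j,\,j\}$ is a fixed coordinate, and writing $m_1(j)=\max\{n-1-j,j\}$, $m_2(j)=\min\{n-1-j,j\}$ one checks $m_2(j)/m_1(j)\to\rho=(1-p^{\ast})/p^{\ast}$ uniformly on $W_n$. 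Hence $\fallfak{m_2(j)}{k}/\auffak{(m_1(j)+1)}{k}\to\rho^k$ uniformly in $j\in W_n$ for each fixed $k$, and averaging against $\P\{J_n=j\}$ (the contribution of the tail outside $W_n$ being $o(1)$) gives $\P\{\hat C_{n-1-J_n,J_n}\ge k\}\to\rho^k$. This is precisely the geometric limit once the centring is taken to be the realised majority count: $C_{n-1-J_n,J_n}-\max\{n-1-J_n,J_n\}\claw G$ with $\P\{G=k\}=\rho^k(1-\rho)$.

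The delicate point, and where I expect the real work to lie, is reconciling the random majority count $\max\{n-1-J_n,J_n\}$, at which the excess is genuinely geometric, with the deterministic centring $n\,p^{\ast}$ in the statement. Since $\max\{n-1-J_n,J_n\}-n\,p^{\ast}$ still carries the $\Theta(\sqrt n)$ fluctuation of the binomial $J_n$, the constant $n\,p^{\ast}$ should be read as the leading-order value of the true (random) centring rather than as an exact recentring of the single variable $C_{n-1-J_n,J_n}$. I would therefore keep the centring at the realised majority throughout and justify the identification at the level at which this variable is actually used, namely the recursion~\eqref{eqn:Xn_DistEqn}, rather than for $C_{n-1-J_n,J_n}$ in isolation. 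The geometric shape itself, by contrast, drops out immediately from the tail computation above; it is the careful bookkeeping of this centring that is the crux.
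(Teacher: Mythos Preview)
Your proposal does not address the stated theorem. Theorem~\ref{the:KPP} is a result about the deterministic two-colour game $C_{m_1,m_2}$ in five asymptotic regimes of $(m_1,m_2)$; it says nothing about $J_n$ or about the randomized quantity $C_{n-1-J_n,J_n}$. In the paper this theorem carries the citation \cite{KuPa2025,KuPanPro2009} and is quoted without proof --- it is an input, not something the paper establishes. What you have written is instead an argument for Lemma~\ref{lem:1}, and you explicitly invoke Theorem~\ref{the:KPP} as a tool in doing so. So as a proof of the statement in question it is circular (or, more precisely, off-target): you are using the theorem rather than proving it.

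If your intent was Lemma~\ref{lem:1}, then your approach is close in spirit to the paper's: both condition on $J_n$, restrict to a window around the mean via concentration, and apply the geometric regime of Theorem~\ref{the:KPP} uniformly there. The paper uses the local limit theorem plus Euler--Maclaurin and a split of the integration range; you use the exact tail $\P\{\hat C_{m_1,m_2}\ge k\}=\fallfak{m_2}{k}/\auffak{(m_1+1)}{k}$ and a Chernoff window, which is a cleaner route to the same geometric shape. Your final paragraph correctly flags a real subtlety: centring at the deterministic $n\,p^{\ast}$ versus the realised majority $\max\{n-1-J_n,J_n\}$ differs by the $\Theta(\sqrt n)$ binomial fluctuation, and the paper's proof of Lemma~\ref{lem:1} glosses over exactly this point when it asserts $\P\{C_{n-1-j,j}=n\cdot p+k\}\to\rho^k(1-\rho)$ ``independent of $t$''. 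Your instinct to carry the random centring through the recursion~\eqref{eqn:Xn_DistEqn} and only identify $n\,p^{\ast}$ at the level of $X_n$ is the sounder way to handle it.
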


\begin{proof}[Proof of Lemma~\ref{lem:1}]
We need to analyze the asymptotics of $C_{n-1-J_n,J_n}$, as $n$ tends to infinity. 
For this, we use the classical normal approximation of the binomial distribution,
\begin{equation}
\label{eq:AsympBin}
\P\{J_n = k\}\sim  \frac{1}{\sigma_n\sqrt{2\pi}}\exp\Big(\frac{(k-\mu_n)^2}{2\sigma_n^2}\Big),
\end{equation}
with 
\[
\mu_n=n\cdot p,\quad\sigma_n^2=n p (1-p),
\]
such that for real $t$ and $n$ tending to infinity
\begin{equation}
\label{eqn:ExpansionBinomial}
\P\{J_n = n\cdot p+t\sqrt{n}p(1-p)\}\sim \frac{1}{\sqrt{2\pi n p(1-p)}}e^{-t^2/2}.
\end{equation}
Correspondingly, we turn to $C_{n-1-J_n,J_n}$, conditioned on $J_n=n\cdot p+t\sqrt{n}p(1-p)$.
We have
\[
\P\{C_{n-1-J_n,J_n}= n\cdot p^{\ast}+k\}
=\sum_{j=0}^{n-1}\P\{J_n=j\}\P\{C_{n-1-j,j}=n\cdot p^{\ast}+k\}.
\]
Approximation of the sum by the Euler-MacLaurin formula and by~\eqref{eqn:ExpansionBinomial} we get
\[
\P\{C_{n-1-J_n,J_n}= n\cdot p+k\}=
\int_1^{n-1} \P\{J_n = j\}\P\{C_{n-1-j,j}=n\cdot p+k\}dj.
\]
We split the integral over the three intervals $[1,(1-\delta)n\cdot p)$, $[(1-\delta)n\cdot p,(1+\delta)n\cdot p]$
and $((1+\delta)n\cdot p^{\ast},n]$ for an arbitrarily small but fixed $0<\delta<1$. For $[(1-\delta)n\cdot p,(1+\delta)n\cdot p]$ we get the dominant part with $j=j(t)= n\cdot p+t\sqrt{n p(1-p)}$ and 
$t\in\R$, asymptotically. The remaining parts are negligible by Chernoff bounds for the binomial distribution, $B=B(n,p)$:
\[
\P\{B\ge (1+\delta)np\}\le e^{-\frac{n p\delta^2}{2+\delta}},
\quad \P\{B\le (1-\delta)np\}\le e^{-\frac{n p\delta^2}{2}}.
\]
By dominated convergence, we obtain
\[
\P\{C_{n-1-J_n,J_n}= n\cdot p^{\ast}+k\}
\sim \int_{-c_n}^{c_n} \frac{1}{\sqrt{2\pi}}e^{-t^2/2}\P\{C_{n-1-j,j}=n\cdot p+k\}dt,
\]
with $j=j(t)$ and $c_n=\delta \sqrt{n}/\sqrt{p/(1-p}$. Noting that
\[
C_{n-1-j,j}=C_{n(1-p)-t\sqrt{n}p(1-p)-1,n\cdot p+t\sqrt{n}p(1-p)},
\]
we observe that by Theorem~\ref{the:KPP} it holds for $t\ll\sqrt{n}$ the estimate
\[
\P\{C_{n-1-j,j}=n\cdot p+k\}\to \rho^k(1-\rho),\quad \rho=\frac{1-p^{\ast}}{p^{\ast}},
\]
independent of $t$. We choose some small $0<\epsilon<\frac12$ and split again the integration interval into
\[
[c_n,c_n]=[-c_n,-n^{\frac12-\epsilon})\cup [-n^{\frac12-\epsilon},n^{\frac12-\epsilon}]\cup(n^{\frac12-\epsilon},c_n].
\]
Two integrals are asymptotically negligible, as
\[
\int_{-c_n}^{-n^{\frac12-\epsilon}} \frac{1}{\sqrt{2\pi}}e^{-t^2/2}\P\{C_{n-1-j,j}=n\cdot p+k\}dt
\le \int_{-\infty}^{-n^{\frac12-\epsilon}} \frac{1}{\sqrt{2\pi}}e^{-t^2/2}dt,
\]
which tends to zero for $n\to\infty$. In the center part we have $t\ll\sqrt{n}$ and use our estimate for $\P\{C_{n-1-j,j}=n\cdot p+k\}$.
Completing the tails for the center and integration over $(-\infty,\infty)$ leads to the stated result.
\end{proof}
Now we are ready to prove our main result.
\begin{proof}[Proof of Theorem~\ref{the:1}]
The starting point for the derivation of the limit law is the distributional equation~\eqref{eqn:Xn_DistEqn}. 
To prove this theorem, we turn first to the case $\frac12<p<1$, such that $\forall n\in\N\colon \mathbb{I}(A_n)=1$. The distributional equation~\ref{eqn:Xn_DistEqn} simplifies to
\begin{equation}
\label{eqn:XnDistSimple}
\begin{split}
X_n &\law I_1 \big(X^{\ast}_{n-1}+1\big)+(1-I_1)\big(I_2\cdot n +(1-I_2)C_{n-1-J_n,J_n}\big).
\end{split}
\end{equation}
Note in passing that the contribution of $(1-I_1)I_2\cdot n$ is asymptotically negligible.
Iteration of the distributional equation leads to a geometric waiting time until the first draw of a letter $b$.
According to Theorem~\ref{the:distDecomp} we get for $n\to\infty$, regardless of the value of $0<p<1$ the equation
\begin{align*}
 \P\{X_n=n-k\} &= p\cdot \P\{X_{n-1}+1=k\}\\
 &\quad+ (1-p)\big((1-(1-p)^{n-1}\big)\P\{C_{n-1-J_n,J_n}=n-k\}.
\end{align*}
Moreover, by iterating this recursive representation we observe that for $n\to\infty$,
\[
\P\{X_n=n-k\} 
\sim \sum_{\ell \ge 1}p^{\ell} \cdot
\P\big\{C_{n-\ell-J_{n+1-\ell},J_{n+1-\ell}}=n-k\big\}.
\]
The argument used before extends to that all $C_{n-\ell-J_{n+1-\ell},J_{n+1-\ell}}$, $\ell\ge 1$ fixed, and the probability mass function
converges. It turns out that the same argument works for $0<p<\frac12$, as for $n\to\infty$ the optimal strategy is guessing the number one, and 
$\mathbb{I}(A_n)=1$.
\end{proof}

\section{More limit laws and phase transitions\label{sec:Boundary}}
We already encountered the three different limit laws for $p\in [0,1]$. Obviously, for $p\in\{0,1\}$, the distribution of $X_n$ degenerates and $\P\{X_n=n\}=1$, whereas for $p=\frac12$ we have a completely different behavior, as the fluctuations around $n/2$ are of order $\sqrt{n}$. This sparks interest in phase transitions for $X_n$, depending on how quickly $p=p(n)$ approaches the boundary for large $n$, or the how quickly $p$ approaches $\frac12$. It will turn out that at the boundary, the transitions are mainly governed by the binomial distribution, whereas for $p$ close to one-half, the transitions are governed by the two-color stage. We only focus on the case $p\to 1$, as the case $p\to 0$ is (nearly) symmetric.
\subsection{Phase transition at the value one-half\label{ssec:oneH}}
We assume that $p=\frac12 + \alpha_n$, with $\alpha_n\to 0$, and set $\alpha_n \sim \frac{b}{n^c}$, with constants $b\in\R$ and $c>0$.
In the following, we distinguish between different intervals for $c$, which leads to transitions for the limit laws of $X_n$,
as the expectation of the binomial distribution $B_n\law B(n,p)$ and its truncated version $J_n$ changes:
\[
\mu_n=\E(B_n)=n\cdot p \sim \frac{n}2 + b\cdot n^{1-c},\quad 
\sigma_n=\sqrt{n\cdot p \cdot (1-p)} \sim \frac12\sqrt{n}.
\]
This influences the limit law of the random variable $C_{n-1-J_n,J_n}$.
\begin{lem}
\label{Prop:twoColorBinLimit}
The random variable $C_{n-1-J_n,J_n}$ has the following limit laws for $p=p(n)=\frac12 + \alpha_n$, with $\alpha_n \sim \frac{b}{n^c}\to 0$,
as $n$ tends to infinity, with constants $b\in\R$, $c>0$:
\[
\frac{C_{n-1-J_n,J_n}-\frac{n}2 }{\beta_n}\claw
\begin{cases}
\Exp(\frac{1}{2|b|})+|b|,\quad \text{for }0<c<\frac12,\\[0.25cm]
\frac12\chi(3,2|b|),\quad \text{for }c= \frac12,\\[0.25cm]
GG,\quad \text{for }c\ge \frac12,
\end{cases}
\]
with $\beta_n=n^{1-c}$ for $0<c<\frac12$ and $\beta_n=\sqrt{n}$ for $c\ge \frac12$. Here, $\Exp(\lambda)$ denotes an Exponential distribution with parameter $\lambda$, $GG$ denotes a generalized gamma distributed random variable, with density $f_{GG}(x)=\sqrt{\frac{2}{\pi}}\cdot 8x^2e^{-2x^2}$, $x\ge 0$, and $\chi(k,\lambda)$ denotes a non-central Chi-distribution with parameter $\lambda$ and $k$ degrees of freedom.
\end{lem}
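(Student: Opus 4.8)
The plan is to condition on the (essentially binomial) random variable $J_n$ and feed the resulting colour composition $(n-1-J_n,J_n)$ into the appropriate regime of Theorem~\ref{the:KPP}, then carry out the ensuing mixture computation. This is the same skeleton as the proof of Lemma~\ref{lem:1}, with the crucial difference that, because $p\to\frac12$, the two-colour stage is no longer locked in the geometric regime: the composition now sits near the diagonal $m_1\approx m_2$, and which regime of Theorem~\ref{the:KPP} applies is dictated by the size of $\alpha_n$, i.e.\ by $c$.

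First I would remove the truncation in $J_n\law\Bin_{\text{pos}}(n-1,p)$: since $(1-p)^{n-1}=(\tfrac12-\alpha_n)^{n-1}\to0$ exponentially, $J_n$ agrees with $B:=B(n-1,p)$ up to an event of exponentially small probability, and I would use the local central limit theorem $\P\{B=j\}\sim\tfrac{1}{\sigma_n\sqrt{2\pi}}\exp(-(j-\mu_n)^2/2\sigma_n^2)$, with $\mu_n=\tfrac n2+b\,n^{1-c}(1+o(1))$ and $\sigma_n=\tfrac12\sqrt n(1+o(1))$. Writing $m_1=\max(J_n,n-1-J_n)$, $m_2=\min(J_n,n-1-J_n)$, $\Delta=|n-1-2J_n|$ and $\hat C_{m_1,m_2}=C_{m_1,m_2}-m_1$, so that $C_{n-1-J_n,J_n}=m_1+\hat C_{m_1,m_2}$, and parametrising $j=\mu_n+\sigma_n t$, one reads off the governing scale $\Delta/\sqrt{m_1}$ as a function of $t$. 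For $c>\tfrac12$ the drift $b\,n^{1-c}$ is $o(\sqrt n)$, so $\Delta\sim\sqrt n\,|t|$ and $\Delta/\sqrt{m_1}\to\sqrt2\,|t|$, placing us in the $\LinExp$ regime with $\hat C_{m_1,m_2}/\sqrt{m_1}\to L_{\sqrt2|t|}$; for $c=\tfrac12$ the drift is of Gaussian order and one gets $\Delta\sim2\sqrt n\,|b+t/2|$, $\Delta/\sqrt{m_1}\to2\sqrt2\,|b+t/2|$, again $\LinExp$ but with a shifted argument; for $0<c<\tfrac12$ the drift $2|b|n^{1-c}$ dominates, $\Delta/\sqrt{m_1}\to\infty$, and Theorem~\ref{the:KPP} puts us in the exponential regime, $\tfrac{\Delta}{m_1}\hat C_{m_1,m_2}\to\Exp(1)$.

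The core of the proof is then the mixture step. For $c\ge\tfrac12$, with $\beta_n=\sqrt n$ and $m_1-\tfrac n2\sim\sqrt n\,|U|$ where $U\law N(b,\tfrac14)$ if $c=\tfrac12$ and $U\law N(0,\tfrac14)$ if $c>\tfrac12$, one obtains $(C_{n-1-J_n,J_n}-\tfrac n2)/\sqrt n\claw|U|+\tfrac1{\sqrt2}L_{2\sqrt2|U|}$, with the $\LinExp$ variable independent of $U$ given $|U|$. Here I would use that, conditionally on $|U|=u$, the shifted variable $u+\tfrac1{\sqrt2}L_{2\sqrt2u}$ has density $4y\,e^{2u^2}e^{-2y^2}$ on $y\ge u$; integrating this against the folded-normal density of $|U|$ the weight $e^{2u^2}$ cancels the Gaussian factor, the $u$-integral collapses to a $\sinh$, and the outcome is the density $\sqrt{\tfrac2\pi}\,\tfrac{2y}{|b|}e^{-2y^2-2b^2}\sinh(4y|b|)$, i.e.\ that of $\tfrac12\chi(3,2|b|)$ when $c=\tfrac12$, degenerating to $\sqrt{\tfrac2\pi}\,8y^2e^{-2y^2}=f_{GG}$ when $b=0$, i.e.\ $c>\tfrac12$. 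For $0<c<\tfrac12$ the analogous bookkeeping with the exponential regime in place of $\LinExp$ would produce the shifted exponential $\Exp(\tfrac1{2|b|})+|b|$ at scale $\beta_n=n^{1-c}$, the shift $|b|$ being the contribution of the deterministic drift $|b|n^{1-c}$.

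For rigour I would, exactly as in the proof of Lemma~\ref{lem:1}, split the $j$-sum into the bulk $[(1-\delta)\mu_n,(1+\delta)\mu_n]$ and two tails, estimate the tails by Chernoff bounds for $B(n-1,p)$, pass to the limit on the bulk by dominated convergence using the (uniform) convergence statements in Theorem~\ref{the:KPP}, and complete the Gaussian tails at the end; the case $b<0$ reduces to $b>0$ via $C_{m_1,m_2}=C_{m_2,m_1}$ and $\pe=\max(p,1-p)$. The main obstacle is the interplay between the three scales present — the drift $b\,n^{1-c}$, the binomial fluctuation $\sqrt n$, and the two-colour fluctuation of $\hat C_{m_1,m_2}$ — which forces both the change of regime and the change of normalisation $\beta_n$ at $c=\tfrac12$: matching the random argument $\Delta/\sqrt{m_1}$ to the correct regime of Theorem~\ref{the:KPP} uniformly in the mixing variable $t$, and then recognising the resulting mixture of a Gaussian with a $\LinExp$ (respectively exponential) law — a computation that hinges on the exact cancellation $e^{2u^2}\cdot e^{-2u^2}$, so that every constant in $\mu_n$, $\sigma_n$ and in the $\LinExp$ density must be tracked — is the part requiring the most care.
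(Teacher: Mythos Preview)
Your proposal is correct and follows essentially the same route as the paper: condition on $J_n$, reduce the tails by Chernoff bounds and the local CLT, feed the resulting composition into the appropriate regime of Theorem~\ref{the:KPP}, and then evaluate the Gaussian--$\LinExp$ (resp.\ exponential) mixture explicitly. The only cosmetic difference is that the paper works with the CDF $F_n(x)$ and differentiates at the end to identify the $\chi(3,2|b|)$ density, whereas you integrate the conditional density $4y\,e^{2u^2}e^{-2y^2}$ against the folded normal directly; the cancellation $e^{2u^2}\cdot e^{-2(u\pm b)^2}=e^{\pm4ub-2b^2}$ you single out is exactly the mechanism that makes the paper's integral collapse to the $\sinh$ term.
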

\begin{remark}[Special case $b=0$]
The density of $\frac12\chi(3,\sqrt{2}\cdot |b|)$ is given by
\[
f_{\chi}(x)=\frac{4x}{\sqrt{2\pi}}e^{-2(x^2+b^2)}\cdot\frac{\sinh(4bx)}b,\quad x\ge 0,
\]
which is an even function in $b$. For $b\to 0$, it converges by l'H\^opital's rule to the density $f_{GG}(x)$ 
of the generalized gamma distribution. Thus, for $c=\frac12$ we can interchange the limits $n$ to infinity with the limit $b$ to zero.
\end{remark}

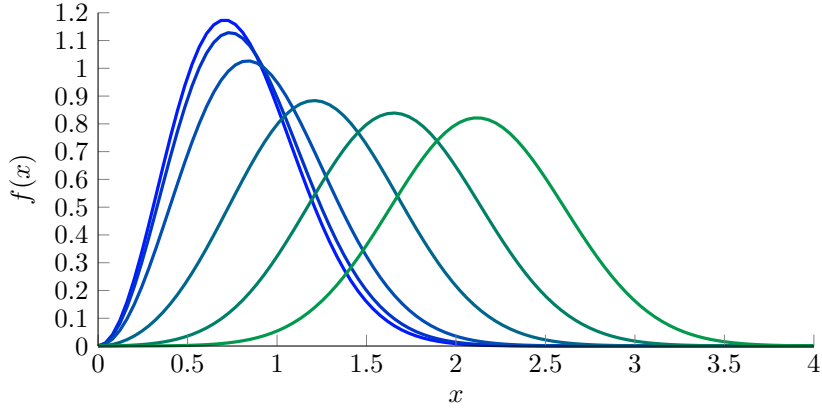
\begin{figure}[!htb]
\begin{tikzpicture}
\begin{axis}[
 domain=0:4,samples=100,
  axis lines*=left,
  height=6cm, width=11cm,
  xtick={0,0.5, 1,1.5, 2,2.5, 3,3.5, 4,4.5,5,5.5}, 
	 ytick={0,0.1, 0.2, 0.3, 0.4, 0.5, 0.6, 0.7,0.8,0.9,1,1.1,1.2}, 
	ymin=0, ymax=1.2,
	xmin=0, xmax=4,
   axis on top,  clip=false, no markers,
	 xlabel={$x$},
   ylabel={$f(x)$},
  ]
 \addplot [very thick, myblauA] {sqrt(2/pi)*8*x^2*exp(-2*x^2)}; 
\addplot [very thick, myblauB] {4*x*exp(-2*x^2-2*0.25^2)*(exp(4*0.25*x)-exp(-4*0.25*x))/(0.25*2*sqrt(2*pi))}; 
 \addplot [very thick, myblauC] {2*4*x*exp(-2*x^2-0.5)*(exp(2*x)-exp(-2*x))/(2*sqrt(2*pi))}; 
 \addplot [very thick, myblauD] {4*x*exp(-2*x^2-2)*(exp(4*x)-exp(-4*x))/(2*sqrt(2*pi))}; 
 \addplot [very thick, myblauE] {4*x*exp(-2*x^2-2*1.5^2)*(exp(4*1.5*x)-exp(-4*1.5*x))/(1.5*2*sqrt(2*pi))}; 
\addplot [very thick, myblauF] {4*x*exp(-2*x^2-8)*(exp(8*x)-exp(-8*x))/(2*2*sqrt(2*pi))}; 
\end{axis}
\end{tikzpicture}
\caption{Plot of the density functions $f_{\chi}(x)$ of $\frac12\chi(3,\sqrt{2}\cdot |b|)$ and the generalized Gamma distribution $f_{GG}(x)$ (case $b=0$ of $f_{\chi}(x)$), for $b\in\{0,0.25,0.5,1,1.5,2\}$ occurring in Proposition~\ref{Prop:twoColorBinLimit}
and Theorem~\ref{the:Half}.}
\label{fig:Densities}%
\end{figure}

The limit law of $X_n$ changes accordingly to the limit law of $C_{n-1-J_n,J_n}$
\begin{theorem}
\label{the:Half}
The random variable $X_n$, counting the number of correct guessing with full feedback after an asymmetric riffle shuffle, has the following limit laws for $p=p(n)=\frac12 + \alpha_n$, with $\alpha_n \sim \frac{b}{n^c}\to 0$, as $n$ tends to infinity, with constants $b\in\R$, $c>0$:
\[
\frac{X_n-\frac{n}2}{\beta_n}\claw
\begin{cases}
\Exp(\frac{1}{2|b|})+|b|,\quad \text{for }0<c<\frac12,\\[0.25cm]
\frac12\chi(3,2\cdot |b|),\quad \text{for }c= \frac12,\\[0.25cm]
GG,\quad \text{for }c\ge\frac12,\\
\end{cases}
\]
with $\beta_n=\max\{n^{1-c},\sqrt{n}\}$.
\end{theorem}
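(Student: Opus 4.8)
The plan is to transport the three limit laws for $C_{n-1-J_n,J_n}$ obtained in Proposition~\ref{Prop:twoColorBinLimit} over to $X_n$ along the distributional recursion \eqref{eqn:Xn_DistEqn}, in exactly the way Lemma~\ref{lem:1} was used in the proof of Theorem~\ref{the:1}. First I would reduce \eqref{eqn:Xn_DistEqn} to the simplified form \eqref{eqn:XnDistSimple} by checking that the non-random indicator $\mathbb{I}(A_n)$ equals $1$ for all large $n$. Since $p=p(n)\to\tfrac12$, eventually $p\in(\tfrac14,\tfrac34)$, so by the local limit theorem for the binomial distribution the mode probability $\binom{n-1}{\kappa_n}p^{\kappa_n}(1-p)^{n-\kappa_n}=(1-p)\,\P\{B(n-1,p)=\kappa_n-1\}$ is of order $n^{-1/2}$, whereas $p+(1-p)^n\ge\tfrac14$; for $p\ge\tfrac12$ this gives $\mathbb{I}(A_n)=1$ for every $n$ (remark after Theorem~\ref{the:distDecomp}), and for $p<\tfrac12$ one only needs in addition $n>n_0$, which holds automatically because $n_0=\lfloor\ln(\tfrac12-p)/\ln(1-p)\rfloor=O(\log n)$. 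The all-$b$ term $(1-I_1)I_2\cdot n$ has probability $(1-p)^n=(\tfrac12+o(1))^n$ and the all-$a$ word $a^n$ probability $p^n$, both exponentially small and hence irrelevant for the limit.

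Next I would iterate \eqref{eqn:XnDistSimple} over the leading run of letters $a$, whose length $L$ satisfies the uniformly summable bound $\P\{L\ge\ell\}=p^\ell$ (eventually $p<\tfrac23$), obtaining as in the proof of Theorem~\ref{the:1}
\begin{equation*}
\P\{X_n=n-k\}=(1-p)\sum_{\ell\ge 0}p^{\ell}\,\P\{C_{n-\ell-1-J_{n-\ell},\,J_{n-\ell}}=n-\ell-k\}+(\text{exponentially small}),
\end{equation*}
where $J_m\law\Bin_{\text{pos}}(m-1,p)$ all carry the same parameter $p=p(n)$. Writing $n-k=\lfloor n/2+x\beta_n\rfloor$, for each fixed $\ell$ the rescaled quantity $\bigl(C_{n-\ell-1-J_{n-\ell},\,J_{n-\ell}}-\tfrac{n-\ell}2\bigr)/\beta_{n-\ell}$ has the same weak limit as $\bigl(C_{n-1-J_n,J_n}-\tfrac n2\bigr)/\beta_n$: indeed $\beta_{n-\ell}/\beta_n\to1$, the centrings differ by $\ell/2=O(1)\ll\beta_n$, and the binomial drift $\E(J_{n-\ell})-\tfrac{n-\ell}2$ differs from $b\,n^{1-c}$ only by $O(1)=o(\beta_n)$. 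Hence, by Proposition~\ref{Prop:twoColorBinLimit}, each summand, after multiplication by $\beta_n$, produces the same limit density $f$ (one of $\Exp(\tfrac1{2|b|})+|b|$, $\tfrac12\chi(3,2|b|)$, or $GG$, according to the range of $c$).

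Finally, using a uniform local-limit bound $\beta_m\,\P\{C_{m_1,m_2}=j\}\le\mathrm{const}$ in the ranges of $(m_1,m_2)$ occurring here (part of the content of Proposition~\ref{Prop:twoColorBinLimit}; compare Theorem~\ref{the:KPP} and \cite{KuPa2025,KuPanPro2009}), the tail $\sum_{\ell>\ell_0}p^{\ell}\,\beta_n\,\P\{\cdots\}\le\mathrm{const}\cdot p^{\ell_0}/(1-p)$ is uniformly small in $n$, so dominated convergence (or simply truncation at a large fixed $\ell_0$) lets me pass the limit $n\to\infty$ inside the $\ell$-sum; since $(1-p)\sum_{\ell\ge0}p^{\ell}=1$, this gives $\beta_n\,\P\{X_n=n-\lfloor n/2+x\beta_n\rfloor\}\to f(x)$, which is the asserted convergence of $(X_n-\tfrac n2)/\beta_n$ to the limit of $(C_{n-1-J_n,J_n}-\tfrac n2)/\beta_n$. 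I expect the main obstacle to be precisely this last step together with the previous one: one must be sure that the two-colour limit laws are genuinely insensitive to $O(1)$ shifts of the deck size and to $o(\beta_n)$ shifts of the binomial drift, and that the convergence in Proposition~\ref{Prop:twoColorBinLimit} is local-limit-type with a uniform bound, so as to justify exchanging the infinite $\ell$-summation with $n\to\infty$; by comparison, verifying $\mathbb{I}(A_n)=1$ and discarding the all-$a$/all-$b$ events are routine via binomial mode asymptotics and Chernoff bounds.
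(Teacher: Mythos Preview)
Your strategy---iterate the distributional recursion over the leading run of $a$'s, invoke the limit law for $C_{n-1-J_n,J_n}$ from Proposition~\ref{Prop:twoColorBinLimit} at each fixed $\ell$, and then interchange the $\ell$-sum with $n\to\infty$---is precisely the route the paper takes. The reduction to $\mathbb{I}(A_n)=1$ via the $n^{-1/2}$ mode bound is correct (the extra remark about $n>n_0$ is superfluous: the comparison ``mode probability $=O(n^{-1/2})$ versus $p+(1-p)^n\ge\tfrac14$'' already settles $\mathbb{I}(A_n)=1$ for all large $n$ in both cases $p\lessgtr\tfrac12$).

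The one difference worth noting is that you work at the level of point masses and aim for a \emph{local} limit statement $\beta_n\,\P\{X_n=\lfloor n/2+x\beta_n\rfloor\}\to f(x)$, whereas the paper works with the distribution function $\P\{X_n\le n/2+x\beta_n\}$. This is exactly why the interchange step is effortless in the paper but is (as you correctly flag) the main obstacle in your write-up: with distribution functions each summand is trivially bounded by $1$, so dominated convergence against the weights $(1-p)p^\ell$ is immediate and no uniform local-limit bound on $\beta_m\,\P\{C_{m_1,m_2}=j\}$ is needed. Moreover, Proposition~\ref{Prop:twoColorBinLimit} itself is proved at the level of $F_n(x)$, not point probabilities, so the local-limit input you would need is not actually supplied there. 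Switching your argument from $\P\{X_n=n-k\}$ to $\P\{X_n\le n/2+x\beta_n\}$ removes the obstacle entirely and brings your proof in line with the paper's.
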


\begin{proof}[Proof of Lemma~\ref{Prop:twoColorBinLimit}]
We consider the distribution function 
\[
F_n(x)=\P\big\{C_{n-1-J_n,J_n}\le n\cdot p + x\beta_n\big\}
\]
for fixed positive real $x$. Conditioning on the truncated binomial distribution gives
\[
F_n(x) =\sum_{j=1}^{n-1}\P\big\{C_{n-1-j,j}\le n\cdot p + x\beta_n\big\}\P\{J_n=j\}.
\]
Our task is to evaluate this sum for $n$ tending to infinity. We proceed using standard methods.
First, we proceed as in the proof of Lemma~\ref{lem:1} and split the integral into three parts, focusing on the dominant contribution close to the mean, as the rest is negligible.
There, we use the local limit theorem for the binomial distribution $\P\{J_n=j\}$~\eqref{eq:AsympBin}, as well as approximating the sum by an integral. Then, we also require the asymptotics of the two-color card-guessing game in the required ranges, as stated in Theorem~\ref{the:KPP}. Combining all these three steps will yield the desired asymptotic expansion of $F_n(x)$. 

By approximating the sum by an integral, Chernoff bounds, and the local limit theorem for the binomial distribution, we get for large $n$ the asymptotics
\[
F_n(x)\sim \int_{-\delta \mu_n/\sigma_n}^{\delta \mu_n/\sigma_n}\P\big\{C_{n(1-p) -t\sigma_n -1,n p+t\sigma_n}\le \frac{n}2 + x\beta_n\big\} \cdot \frac{e^{-\frac{t^2}{2}}}{\sqrt{2\pi}} dt.
\]
Next, insert the asymptotics of $\mu_n$: 
\[
C_{m_1,m_2}=C_{n(1-p) -t\sigma_n -1,n p+t\sigma_n}
\]
with $m_1=m_1(n)$, $m_2=m_2(n)$, such that
\[
m_1=\frac{n}2 + b\cdot n^{1-c} + t\sqrt{n}/2,\quad m_2=n/2 -b\cdot n^{1-c}- t\sqrt{n}/2 -1.
\]
This implies that
\[
\Delta_m=2bn^{1-c}+t\sqrt{n}+1
\]
is at least of order $\sqrt{n}$. We obtain the relations
\[
		\begin{array}{ll}
       m_1=o(|\Delta_m|^2), & \text{for }0<c<\frac12,\\[0.2cm]
       |\Delta_m|\sim |2b+t|\sqrt{m_1}, & \text{for } c=\frac12,\\[0.2cm]
        \Delta_m=t\sqrt{m_1}& \text{for } c>\frac12.
        \end{array}
\]
We begin with the case $0<c<\frac12$. Here, we use an approximation of the integrand using the exponential limit law for the two-color card guessing stage. Before we do so, we split for some fixed $0<\epsilon<\frac12-c$ the interval again into three parts:
\[
[-\delta \mu_n/\sigma_n,\delta \mu_n/\sigma_n]=
[-\delta \mu_n/\sigma_n,-n^{\frac12-\epsilon})\cup [-n^{\frac12-\epsilon},n^{\frac12-\epsilon}] \cup (n^{\frac12-\epsilon},-\delta \mu_n/\sigma_n].
\] 
Two integrals are asymptotically negligible, but the center part we have $t\sqrt{n}\ll n^{1-c}$ and we can apply the asymptotics
for $C_{m_1,m_2}$. We obtain
\begin{equation*}
\begin{split}
&P\{C_{n(1-p) -t\sigma_n -1,n p+t\sigma_n}\le \frac{n}2+n^{1-c}x\}\\
&\quad=\P\{ \frac{C_{n(1-p) -t\sigma_n -1,n p+t\sigma_n} - \frac{n}2 -|b|n^{1-c} }{2|b|n^{1-c}}\le \frac{x}{2|b|}-\frac12\}\\
&\quad\to 1-e^{-x/(2|b|)-\frac12},
\end{split}
\end{equation*}
independent of $t$. Consequently,
\[
F_n(x)\sim \int_{-n^{\frac12-\epsilon}}^{n^{\frac12-\epsilon}}\big(1-e^{-x/(2|b|)-\frac12}\big)\cdot \frac{e^{-\frac{t^2}{2}}}{\sqrt{2\pi}} dt.
\]
Completing the tails for the center and integration over $(-\infty,\infty)$ leads to
\[
F_n(x)\to 1-\exp\Big(-\frac{x}{(2|b|}-\frac12\Big).
\]
Thus, we finally get an exponential limit law with parameter $1/2|b|$, shifted by $|b|$. For $c= \frac12$, we note that $C_{m_{1},m_{2}} \ge \max\{m_{1},m_{2}\}$. We deduce that for $t>2x-2b$ or $t<-2x-2b$ it holds
\[
\P\big\{C_{n/2 -b\cdot n^{1-c}- t\sqrt{n}/2 -1,\frac{n}2 + b\cdot n^{1-c} + t\sqrt{n}/2}\le \frac{n}2 + x\beta_n\big\}= 0.
\]
This leads to
\[
F_n(x)\sim \int_{-2x-2b}^{2x-2b}\P\big\{C_{n(1-p) -t\sigma_n -1,n p+t\sigma_n}\le \frac{n}2 + x\beta_n\big\} \cdot \frac{e^{-\frac{t^2}{2}}}{\sqrt{2\pi}} dt.
\]
We use the substitution $t=u-2b$, $u=t+2b$ to get
\[
F_n(x)\sim \int_{-2x}^{2x}\P\big\{C_{\frac{n}2 -u\sigma_n -1,\frac{n}2+u\sigma_n}\le \frac{n}2 + x\beta_n\big\} \cdot \frac{e^{-\frac{(u-2b)^2}{2}}}{\sqrt{2\pi}} du.
\]
We use Theorem~\ref{the:KPP}, by setting $\rho = \sqrt{2}u$ to get 
\begin{multline*}
\P\big\{C_{\frac{n}2 -u\sigma_n -1,\frac{n}2+u\sigma_n}\le \frac{n}2 + x\beta_n\big\}
\sim 1-\exp\Big(-\sqrt{2}\big(x-\frac{t}2\big)\big(\sqrt{2} u + \sqrt{2}(x-\frac{u}{2})\big)\Big),
\end{multline*}
which simplifies to
\[
 1-\exp\Big(-2x^2+\frac{u^2}2\Big).
\]
This implies that
\[
F_n(x)\to F(x)=\frac{1}{\sqrt{2\pi}}\cdot \int_{-2x}^{2x}e^{-(u-2b)^2/2}\Big(1-e^{-2x^2+\frac{u^2}2}\Big)du
\]
Note that this expression is an even function in $b$. 
We rewrite the limit as
\begin{align*}
F(x)&=\frac{1}{\sqrt{2\pi}}\cdot \int_{-2x}^{2x}e^{-(u-2b)^2/2}dx - \frac{e^{-2x^2-2b^2}}{\sqrt{2\pi}}\cdot \int_{-2x}^{2x}e^{-2bu}du\\
&=\frac{1}{\sqrt{2\pi}}\cdot \int_{-2x}^{2x}e^{-(u-2b)^2/2}dx  - \frac{e^{-2x^2-2b^2}}{\sqrt{2\pi}}\cdot \frac{\sinh(4bx)}{b}.
\end{align*}
Next, we differentiate the last expression with respect to $x$. The first integral leads to
\[
\frac{2}{\sqrt{2\pi}}\cdot \big(e^{-(2x-2b)^2/2}+e^{-(-2x-2b)^2/2}\big)
=\frac{4e^{-2x^2-2b^2}}{\sqrt{2\pi}}\cdot \cosh(4bx).
\]
This term cancels with the second summand of the product rule, leaving only the stated density function.
Finally, we note that the non-central chi distribution with parameter $\lambda$ and $k$ degrees of freedom has the density function
\[
\frac{e^{-x^2/2-\lambda^2/2}x^k \lambda}{(\lambda x)^{k/2}} I_{k/2-1}(\lambda x),
\]
where $I_{\alpha(z)}$ is a modified Bessel function of the first kind:
\[
I_{\alpha}(x) = \sum_{m=0}^\infty \frac{1}{m!\, \Gamma(m+\alpha+1)}\left(\frac{x}{2}\right)^{2m+\alpha}
\]
For $k=3$ and thus $\alpha=\frac12$, it simplifies to a hyperbolic sine,
\[
I_{1/2}(x)=\sum_{m\ge 0}\frac{2\cdot 4^m}{\sqrt{\pi}(2m+1)!}\cdot \left(\frac{x}{2}\right)^{2m+\frac12}
=\frac{2}{\sqrt{2\pi x}}\sum_{m\ge 0}\frac{x^{2m+1}}{(2m+1)!}=\frac{2}{\sqrt{2\pi x}}\sinh(x),
\]
noting that 
\[
\frac{1}{m!\Gamma(m+\frac32)}=\frac{2^{m+1}}{m!(2m+1)!!\Gamma(\frac12)}=\frac{2\cdot 4^m}{\sqrt{\pi}(2m+1)!}.
\]
Next, we set $\lambda=2b$. As for $C\law \chi(3,2\cdot |b|)$ we have to study $\P\{\frac{C}2\le x\}$, our density 
is given by $2\cdot f_C(2x)$, which leads to the stated expression after some simplifications. Finally, the case $c>\frac12$ is similar to the boundary case $b=0$, leading to a Maxwell-Boltzmann limit law, a special instance of the generalized gamma distribution~\cite{KuPa2025}.
\end{proof}

Now we are ready to prove the corresponding result for $X_n$.
\begin{proof}[Proof of Theorem~\ref{the:Half}]
We proceed very similar to the proof of Theorem~\ref{the:1}, using the distributional equation~\eqref{eqn:Xn_DistEqn}. 
By Theorem~\ref{the:distDecomp} we get for $n\to\infty$ and $p=\frac12 + \alpha_n$, with $\alpha_n\to 0$, regardless of $p<\frac12$ or $p>\frac12$, the equation
\begin{align*}
 \P\{X_n\le n\cdot p + x\beta_n\} &= p\cdot \P\{X_{n-1}\le n\cdot p + x\beta_n-1\}\\
 &\quad+ (1-p)\big((1-(1-p)^{n-1}\big)\P\{C_{n-1-J_n,J_n}\le n\cdot p + x\beta_n\}.
\end{align*}
Moreover, by iterating this recursive representation we observe that for $n\to\infty$,
\[
\P\{X_n=\le n\cdot p + x\beta_n\} 
\sim \sum_{\ell \ge 1}p^{\ell} \cdot
\P\big\{C_{n-\ell-J_{n+1-\ell},J_{n+1-\ell}}=\le n\cdot p + x\beta_n-(\ell-1)\big\}.
\]
The argument used before extends to that all $C_{n-\ell-J_{n+1-\ell},J_{n+1-\ell}}$, $\ell\ge 1$ fixed. 
The shift from $(n-\ell)\cdot p+x\beta_{n-\ell}$ to $n\cdot p + x\beta_n-(\ell-1)$ does not change the limit law, 
which proves the stated result, as $\sum_{\ell \ge 1}p^{\ell}\to 1$ for $p\to\frac12$.
\end{proof}

\subsection{Phase transition at the boundary\label{ssec:Boundary}}
We focus our interest on $p\to 1$. The case $p\to 0$ is nearly symmetric and leads to a similar result; hence, we refrain from stating this case explicitly. In order to obtain an intuition for the behavior at the boundary, we look at the measure $\Rif_p$ itself.
\begin{prop}
\label{prop:bound1}
The probability of $\id_n\in\mathcal{S}_n$ under the asymmetric Gilbert-Shannon-Reed model $\Rif_p$ has the following behavior 
for $p$ near one, with $p=1-\alpha_n$, $\alpha_n>0$ and $\alpha_n\to 0$:
  \[
    \Rif_p(\id_n)\longrightarrow 
		\left\{
		\begin{array}{ll}
        1, & \text{for }\alpha_n\ll \frac1n,\\
        e^{-\lambda}, & \text{for }\alpha_n\sim \frac{\lambda}{n},\ \lambda>0,\\
        0, & \text{for } \alpha_n\gg \frac1n.
        \end{array}\right.
				\]
\end{prop}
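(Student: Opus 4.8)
The plan is to work directly from the closed form for $\Rif_p(\id_n)$ recorded in Definition~\ref{eqn:defRif}, namely
\[
\Rif_p(\id_n)=\sum_{k=0}^{n}p^k(1-p)^{n-k}=\frac{(1-p)^{n+1}-p^{n+1}}{1-2p},
\]
and to substitute $p=1-\alpha_n$. With $1-p=\alpha_n$ and $1-2p=2\alpha_n-1$ this becomes
\[
\Rif_p(\id_n)=\frac{(1-\alpha_n)^{n+1}-\alpha_n^{n+1}}{1-2\alpha_n},
\]
which is well defined for all large $n$, since $\alpha_n\to0$ forces $1-2\alpha_n$ to stay bounded away from $0$.

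First I would discard the two inessential pieces. Since $\alpha_n\to0$, for $n\ge1$ we have $0\le\alpha_n^{n+1}\le\alpha_n^2\to0$, and $1-2\alpha_n=1+o(1)$; hence
\[
\Rif_p(\id_n)=(1-\alpha_n)^{n+1}+o(1),
\]
and the whole statement reduces to computing the limit of $(1-\alpha_n)^{n+1}=\exp\bigl((n+1)\log(1-\alpha_n)\bigr)$ in the three regimes.

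For this I would use the elementary two-sided bound $-\frac{\alpha_n}{1-\alpha_n}\le\log(1-\alpha_n)\le-\alpha_n$, valid for $0<\alpha_n<1$, which sandwiches the exponent between $-\frac{(n+1)\alpha_n}{1-\alpha_n}$ and $-(n+1)\alpha_n$. If $\alpha_n\ll\frac1n$, then $(n+1)\alpha_n\to0$, both bounds tend to $0$, so $(1-\alpha_n)^{n+1}\to1$ and $\Rif_p(\id_n)\to1$. If $\alpha_n\sim\frac{\lambda}{n}$, then $(n+1)\alpha_n\to\lambda$ and $\frac{(n+1)\alpha_n}{1-\alpha_n}\to\lambda$, so the exponent tends to $-\lambda$ and $\Rif_p(\id_n)\to e^{-\lambda}$. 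If $\alpha_n\gg\frac1n$, then the upper bound $-(n+1)\alpha_n\to-\infty$, so $(1-\alpha_n)^{n+1}\to0$ and $\Rif_p(\id_n)\to0$.

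There is no genuine obstacle here: the argument is elementary once the closed form is in hand, and the only items that deserve a line of care are the negligibility of $\alpha_n^{n+1}$, the fact that $1-2\alpha_n$ does not vanish for large $n$, and the uniformity of the logarithm sandwich, which makes all three regimes fall out of the same pair of inequalities. As an alternative one could avoid the closed form entirely and estimate the sum $\sum_{k=0}^n p^k(1-p)^{n-k}$ directly, the term $k=n$ being dominant for $p$ near $1$; this reproduces $\frac{(1-\alpha_n)^{n+1}}{1-2\alpha_n}$ up to negligible corrections, or one may phrase it probabilistically via Lemma~\ref{lem:description} as the probability that the random word $W$ has all of its $a$'s preceding all of its $b$'s.
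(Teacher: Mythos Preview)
Your proof is correct and follows essentially the same route as the paper: start from the closed form $\Rif_p(\id_n)=\frac{(1-p)^{n+1}-p^{n+1}}{1-2p}$ in Definition~\ref{eqn:defRif}, substitute $p=1-\alpha_n$, discard the negligible pieces, and analyze $(1-\alpha_n)^{n+1}$ via the $\exp$--$\log$ representation. The only cosmetic difference is that the paper invokes the Taylor expansion of $\ln(1-x)$ while you use the explicit sandwich $-\alpha_n/(1-\alpha_n)\le\log(1-\alpha_n)\le-\alpha_n$; both give the three regimes immediately.
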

\begin{proof}
From Definition~\ref{eqn:defRif} we get
\[
\Rif_p(\id_n)=\frac{(1-p)^{n+1}-p^{n+1}}{1-2p}
= \frac{(1-\alpha_n)^{n+1}-\alpha_n^{n+1}}{1-2\alpha_n}.
\]
Then, the standard $\exp-\ln$ representation and expansion of $\ln(1-x)$ at $x=0$ leads to the stated result:
\begin{align*}
(1-\alpha_n)^{n+1}=\exp\Big((n+1)\cdot \ln(1-\alpha_n)\Big)
=\exp(-n\cdot \alpha_n)\cdot \Big(1+\gro\big(\frac{\alpha_n^2}{n}\big) +\gro\big( \alpha_n\big)\Big).
\end{align*}
\end{proof}
By Proposition~\ref{prop:bound1}, we anticipate a transition for the limit law of $X_n$, 
as 
\[
\P\{X_n=n\}=\Rif_p(\id_n).
\] For the sake of both simplicity and convenience, we set $\alpha_n=\frac{\lambda}{n^{c}}$, $c>0$ and distinguish according to the value $c$. 
We will observe a classical phase transition from a degenerated limit law, to Poisson, to a central limit theorem, often encountered in the analysis of random structures. Here, this phase transition is essentially directly driven by an underlying binomial distribution.
\begin{theorem}[Phase transition near $p=1$]
\label{the:PoN}
For $p$ near one, such that $p=1-\alpha_n$, with $\alpha_n=\frac{\lambda}{n^c}$ and constants $\lambda,c>0$, we observe
the following results for the number of correct guesses $X_n$ in a once-asymmetrically riffle-shuffled deck of $n$ cards with full feedback:
\[
		\begin{array}{ll}
        n-X_n\to 0, & \text{for }c>1,\\[0.2cm]
        n-X_n\to\Po(\lambda), & \text{for }c=1,\\[0.2cm]
				\frac{n-X_n - \lambda n^{1-c}}{\sqrt{\lambda n^{1-c}}}\to \mathcal{N}(0,1), & \text{for } c<1.\\[0.2cm]
        \end{array}
				\]
\end{theorem}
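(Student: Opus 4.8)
The plan is to reduce everything to the behavior of the binomial cut position, exactly as in the proof of Theorem~\ref{the:1}, but now tracking the \emph{deficit} $n - X_n$ rather than the excess. The key observation is that when $p = 1-\alpha_n$ with $\alpha_n \to 0$, the deck is overwhelmingly close to the identity: the cut happens at position $\mathrm{Cut} = B(n,p)$, which is $n$ minus a $B(n,1-p) = B(n,\alpha_n)$ number of cards on the right packet. From the distributional equation~\eqref{eqn:Xn_DistEqn} (with $\mathbb{I}(A_n)=1$ throughout, since $p \ge \tfrac12$ eventually), iterated as in the proof of Theorem~\ref{the:1}, one gets
\[
\P\{X_n = n-k\} \sim \sum_{\ell \ge 1} p^{\ell}\, \P\{C_{n-\ell-J_{n+1-\ell},\,J_{n+1-\ell}} = n-k\},
\]
where $J_m \law \mathrm{Bin}_{\mathrm{pos}}(m-1,p)$. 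First I would show that the dominant contribution comes from the first few values of $\ell$ (since $p^\ell \to 1$ but the mass for large $\ell$ is controlled by $(1-p)$ factors absorbed into the $C$-term), and that the relevant second packet $m_2 = m-1-J_m$ has size distributed essentially as $B(m-1,\alpha_m)$, which is asymptotically $\mathrm{Po}(\lambda m^{1-c})$ when $c=1$ (here $\lambda m^{1-c} = \lambda$), degenerate at $0$ when $c>1$, and asymptotically normal with mean and variance $\sim \lambda m^{1-c}$ when $c<1$.

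Next I would invoke the first regime of Theorem~\ref{the:KPP}: when $m_2 = o(m_1)$ — which holds almost surely here since $m_2 = O(\alpha_m m) = o(m)$ — we have $\P\{\hat C_{m_1,m_2} = 0\} = 1 - m_2/(m_1+1) \sim 1$, i.e.\ $C_{m_1,m_2} = m_1$ with probability tending to one, and more precisely $C_{m_1,m_2} - m_1$ is stochastically dominated by something of order $m_2/m_1$. The crucial point is that the \emph{number of correctly guessed cards equals essentially $m_1 = (\text{total}) - m_2$}, so the deficit $n - X_n$ is, up to a lower-order correction coming from the two-color stage, exactly the size of the smaller (right) packet, i.e.\ a $B(n,\alpha_n)$-type random variable. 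Thus $n - X_n \approx B(n,\alpha_n)$ in distribution, and the three cases of the theorem follow from the classical Poisson/degenerate/normal limits of $B(n, \lambda/n^c)$: for $c>1$ the mean $\lambda n^{1-c} \to 0$ forces $B(n,\alpha_n) \to 0$; for $c=1$ the mean is $\lambda$ and one gets $\mathrm{Po}(\lambda)$ by the Poisson limit theorem; for $c<1$ the mean $\lambda n^{1-c} \to \infty$ and the de Moivre--Laplace theorem (with $p \to 1$, using that the skewness-type error terms are $o(1)$) gives the stated CLT after centering by $\lambda n^{1-c}$ and scaling by $\sqrt{\lambda n^{1-c}}$.

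The main obstacle, as I see it, is controlling the discrepancy between $n - X_n$ and the raw packet size $B(n,\alpha_n)$ uniformly well enough that it does not perturb the three limit laws. In the $c>1$ regime this correction is genuinely negligible (both the packet size and the two-color correction vanish). In the $c=1$ regime one must check that the two-color stage contributes $o_{\P}(1)$ relative to the $O(1)$ Poisson fluctuation: this follows because, conditioned on $m_2 = O(1)$, the correction $\hat C_{m_1,m_2}$ has mean $O(m_2/m_1) = o(1)$, so it cannot shift the integer-valued Poisson limit. The delicate case is $c<1$: here the packet size has fluctuations of order $\sqrt{n^{1-c}}$, which \emph{is} the scale being resolved, so one needs the two-color correction to be $o_{\P}(\sqrt{n^{1-c}})$ — and indeed, conditioned on $m_2 \sim \lambda n^{1-c}$, Theorem~\ref{the:KPP} (region $m_2 = o(m_1)$, with the refined tail bound $\P\{\hat C_{m_1,m_2} = k\} \le (m_2/m_1)^k$ or similar geometric-type control) gives $\hat C_{m_1,m_2} = O_{\P}(m_2/m_1) = O_{\P}(n^{-c} \cdot n^{1-c}) = O_{\P}(n^{1-2c})$, which is indeed $o(\sqrt{n^{1-c}})$ precisely when $c < 1$. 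I would also need to handle the summation over $\ell$ and the conditioning on $J_{n+1-\ell}$ carefully — but this is exactly the machinery already deployed in the proofs of Lemma~\ref{lem:1} and Theorem~\ref{the:1} (Euler--Maclaurin approximation of the sum, Chernoff tail bounds to discard the bulk away from the relevant scale, dominated convergence), so I expect it to go through with the same level of rigor, the only new ingredient being the Poisson/normal dichotomy for $B(n,\lambda/n^c)$ in place of the fixed-$p$ local limit theorem.
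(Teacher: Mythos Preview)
Your overall strategy — identify $n-X_n$ with the number of $b$'s in the random word, i.e.\ a $\Bin(n,\alpha_n)$ variable, and then show the two-color correction $\hat C_{m_1,m_2}$ is negligible on the relevant scale — is exactly the mechanism behind the paper's proof. For $c>1$ and $c=1$ the paper does precisely this (via $\Rif_p(\id_n)\to 1$ and via the events $E_{n,k}=\{\text{exactly }k\text{ letters }b\}$ together with the degenerate case of Theorem~\ref{the:KPP}), so there your proposal and the paper coincide. For $c<1$ the paper instead passes to characteristic functions of $\hat Y_n=(n-X_n-\lambda n^{1-c})/\sqrt{\lambda n^{1-c}}$ and iterates the recurrence for $\phi_n(t)$ to a closed form; your direct ``binomial plus negligible correction'' route is a legitimate and somewhat more transparent alternative, since it avoids the bookkeeping with $\delta_n$, $\Gamma_{n,k}$ and the telescoping products.

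There is, however, a genuine arithmetic slip in your treatment of the CLT case that you should fix. You write $\hat C_{m_1,m_2}=O_\P(m_2/m_1)=O_\P(n^{-c}\cdot n^{1-c})=O_\P(n^{1-2c})$, but $m_2/m_1\sim \lambda n^{1-c}/n=\lambda n^{-c}$, not $n^{1-2c}$; and your subsequent claim that $n^{1-2c}=o(\sqrt{n^{1-c}})$ ``precisely when $c<1$'' is false (it actually requires $c>1/3$). Fortunately the correct statement is stronger and easier: since $\hat C_{m_1,m_2}$ is integer-valued and $\P\{\hat C_{m_1,m_2}\ge 1\}=m_2/(m_1+1)\sim \lambda n^{-c}\to 0$, one has $\hat C_{m_1,m_2}\to 0$ in probability, which is $o_\P(\sqrt{\lambda n^{1-c}})$ for every $0<c<1$. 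With this correction your argument goes through, and the remaining issue — uniformity over the iteration index $\ell$ — is, as you say, handled by the same Chernoff/Euler--Maclaurin machinery already used for Lemma~\ref{lem:1} and Theorem~\ref{the:1}.
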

\begin{proof}
For $c>1$ the result follows directly from Proposition~\ref{prop:bound1}. Let $E_{n,k}$ denote the event the we observe $k$ times the letter $b$ and $n-k$ times the letter $a$ in the random word $W_1\dots W_n$, with $k\in\N$ independent of $n$, excluding the case $a^{n-k}b^k$. Then, by definition of the riffle shuffle, we have
\[
\P\{E_{n,k}\}=\Big(\binom{n}{k}-1\Big)(1-p)^k p^{n-k}.
\]
Thus, for $p=1-\alpha_n$, $\alpha_n\sim \frac{\lambda}{n}$ we have 
\[
\P\{E_{n,k}\}\to \frac{\lambda^k}{k!}e^{-\lambda}, \quad k\ge 0,
\]
by the classical Poisson limit law for the binomial distribution, or directly by application of Stirling's formula for the factorial. 
We have $\P\{X_n=n-k\}\sim \P\{E_{n,k}\}$. This is due to the fact that by the optimal strategy we guess for large $n$ at least $n-k$ correct, but also not more. As we have exponential waiting times for the occurrences of $b$, in particular the waiting time for $k$ letters $b$ tends to the sum of $k$ independent exponentials, also known as an Erlang distribution $\Erl(\lambda, k)$.
This implies the probability that $E_{n,k}$ leads to more than $n-k$ successes is asymptotically negligible, as we always guess the increasing subsequence originally stemming from the $n-k$ letters $a$. We point out an alternative more direct derivation of the Poisson limit law by using the distributional equation~\eqref{eqn:Xn_DistEqn}. Note that, even in the simplified form~\eqref{eqn:XnDistSimple}, the argument and the calculations are slightly involved. The reason for this are the several layers of randomness involved: first, an exponential waiting time for the first occurrence of $b$ as the limit law for the geometric distribution; second, the Poisson limit law of $J_n$; third, the degenerate limit law of $C_{n-1-J_n,J_n}$ by Theorem~\ref{the:KPP}. We outline this argument in the following.
Let us start with the case $k=0$. We already observed that $\Rif_p(\id)\to e^{-\lambda}$. 
For $k\ge 0$ we obtain from the distributional equation the representation
\begin{equation}
\begin{split}
\label{eq:DistPoisson}
\P\{n-X_n=k\}&=\mathbb{I}(\{k=0\})\cdot\Rif_p(\id)\\
&\quad+\sum_{\ell=0}^{n-1}p^{\ell}(1-p)\big(1-(1-p)^{n-1}\big)\P\{C_{n-1-\ell-J_{n-\ell},J_{n-\ell}}=n-k-\ell\}.
\end{split}
\end{equation}
Assume first that $k\ge 1$. The geometric waiting time for the first occurrence of $b$ approaches for $n\to\infty$, $p= 1-\frac{\lambda}n$ an exponential distribution.
As outlined, we approximate the sum by an integral. A (truncated) binomial distribution $J_N$ with $N=n(1-t)\to\infty$, $t\in(0,1)$ and parameter $p$ has a Poisson limit law: $N-J_N\to \Po((1-t)\lambda)$. Combining the steps leads for $k\ge 1$ to
\[
\P\{n-X_n=k\}\sim \int_0^{1}\Big(1-\frac{\lambda}n\Big)^{\ell}\lambda \P\{C_{n-1-\ell-J_{n-\ell},J_{n-\ell}}=n-k-\ell\}dt,
\]
with $\ell=n\cdot t$. A closer inspection of $C_{n-1-\ell-J_{n-\ell},J_{n-\ell}}$ gives
\begin{equation*}
\begin{split}
&\P\{C_{n-1-\ell-J_{n-\ell},J_{n-\ell}}=n-k-\ell\}\\
&\quad\sim\sum_{r\ge 0}\P\{J_{n-\ell}=n-1-\ell-r\}\P\{C_{r,n-\ell-r-1}=n-k-\ell\}.
\end{split}
\end{equation*}
The only contribution here comes from the case $r=k-1$, as by Theorem~\ref{the:KPP} all other summands tend to zero.
Thus, we obtain further
\[
\P\{n-X_n=k\}\sim \int_0^{1}\Big(1-\frac{\lambda}n\Big)^{nt}\lambda \frac{(1-t)^{k-1} \lambda^{k-1}}{(k-1)!}e^{-\lambda(1-t)}dt=
\frac{\lambda^k}{k!}e^{-\lambda}.
\]
For $k=0$ we do not have a contribution from the sum in~\eqref{eq:DistPoisson}, but only the $\Rif_p(\id)$, whose asymptotic we already know from Proposition~\ref{prop:bound1}. In order to obtain the central limit theorem, we proceed in a slightly different way turning to characteristic function. This avoids the more cumbersome expressions
from the expansion of the distribution functions. Set $Y_n=n-X_n$. The distributional equation~\eqref{eqn:Xn_DistEqn}
of $X_n$ is readily translated and we get
\begin{equation}
\label{{eqn:Yn_DistEqn}}
\begin{split}
Y_n &\law I_1\cdot  Y^{\ast}_{n-1}+ (1-I_1)(1-I_2)\cdot \big(n-C_{n-1-J_n,J_n}\Big), \quad n\ge 2,
\end{split}
\end{equation}
where $Y^{\ast}$ denotes an independent copy of $Y$. The initial condition is $Y_1=0$. 
Recall that $J_n\law B_{\text{pos}}(n-1,p)$, such that the total variation distance is given by
\[
d_{\text{TV}}(J_n;B(n-1,p))=
(1-p)^{n-1}
\]
For large $n$, we have $J_n\sim \mu_n + \sigma_n\cdot\mathcal{N}(0,1)$.
For $B(n,p)$ this follows classically by either characteristic functions of by checking Lindeberg's condition.
Here, $\mu_n$ and $\sigma_n$ are given by
\[
\mu_n= n\cdot p= n-\lambda n^{1-c},\quad \sigma_n\sim \sqrt{n p(1-p)} =
\sqrt{\lambda n^{1-c}(1-\lambda n^{-c}}\sim \sqrt{\lambda} n^{\frac{1-c}2}.
\]
By our assumption $\mu_n\to \infty$ and $J_n$ satisfies a central limit theorem. 
This extends to $J_N$ for any $N=N(n)\to\infty$.
By Theorem~\ref{the:KPP}, it holds $C_{n-1-J_n,J_n}\approx J_n$, which also motivates the additional centering by $n\alpha_n$, as well as the scaling by $\sqrt{n\alpha_n}$ in the stated central limit theorem for $X_n$.  
Let $\hat{Y_n}=(Y_n-\lambda n^{1-c})/\sqrt{\lambda n^{1-c}}$. We set 
\[
\gamma_n=\lambda n^{1-c},\quad \delta_n=\frac{\sqrt{\gamma_{n-1}}}{\sqrt{\gamma_n}},
\]
and use the backward difference operator $\nabla \gamma_n=\gamma_n-\gamma_{n-1}$. We obtain 
\begin{equation}
\label{eqn:Yn_DistEqn}
\begin{split}
\hat{Y}_n &\law I_1\cdot  \delta_n\big(\hat{Y}^{\ast}_{n-1}-\nabla \gamma_n\big) + (1-I_1)(1-I_2)\cdot \frac{n-C_{n-1-J_n,J_n} -\gamma_n}{\sqrt{\gamma_n}}\\
&\quad-(1-I_1)I_2\sqrt{\gamma_n} ,
\end{split}
\end{equation}
for $n\ge 2$, with $\hat{Y}_1=-\sqrt{\lambda}$. Let $\phi_n(t)=\E(e^{i t \hat{Y}_n})$ denote the characteristic function of $\hat{Y}_n$
and 
\[
\psi_n(t)=\E(\exp\Big(i t (n-C_{n-1-J_n,J_n}-\gamma_n)/\sqrt{\gamma_n} \Big)).
\]
We obtain the recurrence relation
\begin{equation}
\label{eqn:rec1}
\phi_{n}(t)= p \cdot \phi_{n-1}\Big(\delta_n t\Big)e^{-i t \nabla \gamma_n}+
 (1-p)\big(1-(1-p)^n\big)\psi_n(t)+(1-p)^n\sqrt{\gamma_n},
\end{equation}
for $n\ge 2$, with initial value $\phi_{1}(t)=e^{-it\sqrt{\lambda}}$. Informally, for large $n$ we have
\[
\phi_{n-1}\Big(\delta_n t\Big)e^{-i t \nabla \gamma_n}\to \phi_{\infty}(t),
\]
as well as $\psi_n(t)\to e^{-t^2/2}$, such that
\[
\phi_\infty(t) \approx \lim_{n\to \infty}\big(1-(1-p)^n\big)\psi_n(t).
\]
Of course, this hand-waving argument only motivates the limit law and is highly non-rigorous, as for $p\to 1$ we have $\lim_{n\to\infty}(1-p)=0$. Thus, this argument  actually leads only to a degenerate equation:
\[
\hat{Y}\law \hat{Y}^{\ast},\quad\text{or equivalently }\phi_\infty(t)=\phi_\infty(t).
\]
We sketch how to obtain the result in a rigorous way: first, derive a closed form expression from~\eqref{eqn:rec1}:
\begin{equation*}
\begin{split}
\phi_{n}(t)&=\sum_{k=0}^{n-1}p^k e^{-it\Gamma_{n,k}}\Big((1-p)\big(1-(1-p)^{n-k}\big)\psi_{n-k}(t\fallfak{\delta_n}k)
+(1-p)^{n-k}\fallfak{(\sqrt{\gamma_n})}{k+1}\Big)\\
&\quad+p^{n-1}e^{-it\Gamma_{n,n-1}}\phi_{n}(\fallfak{\delta_n}{n-1}t),
\end{split}
\end{equation*}
where we have used the short-hand notations
\[
\fallfak{\delta_n}k:=\prod_{\ell=0}^{k-1}\delta_{n-k},\quad
\Gamma_{n,k}:=\sum_{\ell=0}^{k-1}\nabla\gamma_{n-\ell}=\gamma_n-\gamma_{n-k}.
\]
Note that the arising sum telescopes, which leads to the stated simplification of $\Gamma_{n,k}$. Of course, we anticipate the most contribution near $k\approx n^{c}t$, $t>0$, from the geometric waiting time, where $\psi_{n-k}(t)$ converges to the characteristic function of the normal distribution. 
There, $\Gamma_{n,k}\approx 1$, as well as $\fallfak{\delta_n}k\approx 1$. As before, we need to approximate the sum by an integral, 
which is then split into two parts, where dominant part is over the interval of the form $[0,n^{c+\epsilon}]$. 
The remaining part $(n^{c+\epsilon},n-1]$ of the integral turns out to be negligible, due to the factor $p^k$ decaying too fast, which leads to the stated limit law. 
\end{proof}

\section{Summary and Outlook}
We consider a card guessing game with complete feedback after an asymmetric riffle shuffle. The optimal strategy for maximizing the number of correct guesses was presented. 
We also obtained several limit laws and phase transitions for the number of correct guesses. Concerning generalizations and extensions for future research, one could try to show moment convergence for $X_n$ by adapting the generating functions approach carried out in~\cite{KuPa2025}.
Moreover, another point of interest is to study the first pure luck case~\cite{KuPa2025}, where we anticipate a phase transition from the arcsine law, observed before. 
Finally, it may be also of interest to study this model for guessing with no feedback at all.

\section*{Declarations of interest and Acknowledgments}
There are no competing financial or personal interests that influenced the work reported in this paper. 

\smallskip 

The author thanks Alois Panholzer for discussions about riffle shuffles and his interest in this work.
\bibliographystyle{cyrbiburl}
\bibliography{CardGuessingUnRiffle-refs}{}


\section*{Appendix}
\begin{code}
For the reader's convenience we include the short \texttt{R}-Code used for obtaining the probabilities in Example~\ref{Ex:3}.
It creates a \text{.csv}-file with the values of $\max_{2\le m}\P\{\FC=m\}$.

\smallskip

\begin{lstlisting}[style=Rstyle]
#Initialising the probabilities
p=0.15; q=1-p

#Determining the end of P(FC=1)>=1/2
n0=floor(log(0.5-p)/log(1-p))

#ranges
r1=(n0+1):40;r2=1:n0;

#ProbabilitiesMode
vecProb=dbinom(floor(r1*p),r1-1,p)*q
modes=c(floor(r1*p))

#ProbabilitiesFC=1
vecOne=c(p+q^r1); 

#ProbabilitiesFC=1, Begin
vecOneStart=c(p+q^r2)

vecProbStart=0;
for (i in 2:n0) 
  {
   upper=i-1;
    v=q*dbinom(1:upper,i-1,p);
    vecProbStart=c(vecProbStart,max(v))  
    }
  
rTotal=c(r2,r1)
vecProb=c(vecProbStart,vecProb)

vecProbPairs<- matrix(c(rbind(rTotal, vecProb)), ncol = 2, byrow = TRUE)
colnames(vecProbPairs) <- c("x","y")
write.csv(vecProbPairs,"unRifflePoints.csv", row.names = FALSE)
\end{lstlisting}
\end{code}

\end{document}